\renewcommand{\emptyset}{\varnothing}
\newcommand{\rootvertex}{\varnothing}
\newcommand{\E}{\mathbf{E}}
\renewcommand{\P}{\mathbf{P}}
\newcommand{\Aa}{\mathcal{A}}
\newcommand{\Bb}{\mathcal{B}}
\newcommand{\Uu}{\mathcal{U}}
\newcommand{\TT}{\mathbb{T}}
\newcommand{\ZZ}{\mathbb{Z}}
\newcommand{\f}{\frac}
\newcommand{\ind}[1]{\mathbf 1 \{ #1 \}}
\newcommand{\RR}{\mathbb{R}}
\newcommand{\rS}{T}
\newcommand*\proc{{\mathpalette\bigcdot@{.7}}}
\newcommand*\bigcdot@[2]{\mathbin{\vcenter{\hbox{\scalebox{#2}{$\m@th#1\bullet$}}}}}
\DeclareMathOperator{\Poi}{Poi}
\DeclareMathOperator{\Bin}{Bin}
\newtheorem{thm}{Theorem}
\newtheorem{prop}[thm]{Proposition}
\theoremstyle{definition}
\newcommand{\stprec}{\preceq}
\newcommand{\stsucc}{\succeq}
\newcommand{\kd}{\lceil d/c \rceil}
\newcommand{\fC}{h_{C,c}}
\newcommand{\Vset}{\mathcal{V}}
\begin{document}


\section{Introduction}

The \emph{frog model} in its most general form is an interacting particle system taking place
on a countable collection of vertices, typically a graph, with a designated vertex $\emptyset$ that
we call the root. The process is given by a pair
$(\eta, S)$ of particle counts and paths. Initially, there is one active particle at the root, and
$\eta(v)$ sleeping particles at each nonroot vertex~$v$. When activated, the $i$th particle
starting at vertex~$v$  moves according to the path $S_\proc(v,i)$, with $S_0(v,i)$ assumed to be $v$.
Whenever an active particle moves to a new vertex, it activates all sleeping particles there.
We continue the common practice of referring to the particles as frogs.

Let $\TT_d$ denote the infinite $d$-ary tree rooted at $\emptyset$; that is, $\TT_d$ is the tree
in which $\emptyset$ has degree~$d$ and all other vertices have degree~$d+1$. Our topic is the frog
model on $\TT_d$ where $(\eta(v))_{v\neq\emptyset}$ is an i.i.d.-$\Poi(\mu)$ collection of random variables,
$(S_\proc(v,i))_{v,i}$ is a collection of independent simple random walk paths, and the two collections
are independent. We abbreviate this as the frog model on $\TT_d$ with i.i.d.-$\Poi(\mu)$ frogs per site.

We call a realization of the frog model \emph{transient} if the root is visited finitely many times
by frogs and \emph{recurrent} if it is visited infinitely often. Recurrence versus
transience is perhaps the most basic question for the frog model. It has been studied on $\ZZ^d$
under a variety of initial conditions and frog paths
\cite{telcs1999,random_recurrence,recurrence,new_drift,01frog,rosenberg}. In \cite{HJJ1,HJJ2,JJ3_order},
we address the question on $d$-ary trees. For more background material on the frog model,
see the introduction to \cite{JJ3_order}.

We show in \cite{HJJ2} that the frog model on a $d$-ary tree with
i.i.d.-$\Poi(\mu)$ frogs per vertex undergoes a phase transition between transience and recurrence
as $\mu$ grows. In more detail, for each $d\geq 2$ there exists a critical value $\mu_c(d)$
such that the model is almost surely transient for $\mu<\mu_c(d)$ and is almost surely recurrent for
$\mu>\mu_c(d)$. The proof in \cite{HJJ2} shows that for some constants $C,C'>0$,
we have $ Cd \leq \mu_c(d) \leq C'd\log d$.
In this paper, we sharpen this result by removing the $\log d$ factor from the upper bound,
determining the order of $\mu_c(d)$ up to constant factors.

\begin{thm}\thlabel{thm:critical}
 For all sufficiently large $d$, it holds that
$.24 d \leq \mu_c(d) \leq 2.28d$.
\end{thm}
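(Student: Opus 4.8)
The plan is to recast recurrence versus transience of the root as survival versus extinction of a self-similar cascade, and then to sandwich the law governing this cascade between tractable recursions using the orderings $\pgfprec$ and $\icvprec$. First I would isolate the self-similar quantity. Fix a vertex $w$ with parent $u$, so that the subtree hanging below $w$ is an isomorphic copy of $\TT_d$. Imagine injecting one active frog into $w$ from $u$, waking the $\Poi(\mu)$ frogs resident at $w$, and let $Y$ be the number of frogs that subsequently cross the edge from $w$ up to $u$. Conditioning on the trajectories of the frogs currently at $w$ — each of which wakes a fresh copy of a child subtree and receives back an independent copy $Y_i$ of $Y$ — produces a recursive distributional equation for $Y$ driven by the simple random walk step probabilities and by $\Poi(\mu)$. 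Since the root is hit infinitely often exactly when these upward crossings never dry up, \thref{thm:critical} reduces to showing that the recursion for $Y$ is supercritical once $\mu > 2.28d$ and subcritical once $\mu < .24d$. The backbone of every estimate is the escape computation for a single walk: on $\TT_d$ a simple random walk started at a child reaches its parent with probability exactly $1/d$, the smaller root of $d\beta^2-(d+1)\beta+1=(\beta-1)(d\beta-1)=0$. Combined with Poisson thinning, this says the $\Poi(\mu)$ frogs woken at $w$ contribute, to first order, a $\Poi(\mu/d)$ number of direct upward crossings, so the per-subtree yield is of order $\mu/d$ and the transition must sit at $\mu \asymp d$.

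For the upper bound I would bound $Y$ from below in the $\pgfprec$ order. Discarding all but the frogs woken at $w$ together with the single downward excursion each makes, and using that $\Poi(\mu)$ frogs cross up independently with probability $1/d$, gives a recursion dominated below by a recursive tree process whose offspring law is explicit. Because $\pgfprec$ directly controls the fixed points of the generating functions that encode extinction probabilities, showing this comparison process has a probability generating function with a fixed point bounded away from $1$ — equivalently that its cascade survives — forces recurrence of the root. Carrying the constant through this fixed-point analysis is what produces the threshold $2.28d$.

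For the lower bound I would instead dominate $Y$ from above in the $\icvprec$ order, whose defining property is that $X \icvprec Z$ implies $\E[\phi(X)] \le \E[\phi(Z)]$ for every increasing concave $\phi$, and in particular (taking $\phi=\id$) controls the mean. The key structural fact is concavity: the number of \emph{distinct} frogs a walk wakes in a fresh subtree is a concave function of the Poisson intensity, and excursions that merely return frogs to $w$ and send them back down must not be double counted. Replacing the true, positively correlated cascade by an increasing–concave upper envelope yields a recursion whose expected total yield I can bound below $1$ whenever $\mu < .24d$; propagating this mean bound through the recursion makes the comparison cascade subcritical, so the root is visited only finitely often and $\mu_c(d) \ge .24d$.

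The main obstacle is the feedback between adjacent levels: frogs cross edges in both directions and a single frog may wake several child subtrees, so the contributions $Y_1,\dots,Y_d$ are not cleanly independent of the frogs at $w$, and the ``offspring'' are not a genuine i.i.d.\ sum. The entire purpose of the $\pgfprec$ and $\icvprec$ comparisons is to replace this intractable dependent recursion by monotone one-sided bounds that preserve super- or subcriticality. The delicate work, and the source of the gap between $.24d$ and $2.28d$, is verifying that the relevant recursion maps are monotone in these orders and that the multi-generational cascade — not merely the first return — is captured tightly enough for the first-moment and generating-function thresholds to land at constant multiples of $d$.
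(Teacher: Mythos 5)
Your overall skeleton (a self-similar up-crossing count $Y$ satisfying a recursive distributional equation, then one-sided comparisons with tractable processes) matches the paper's strategy, but the specific truncation on which you build the upper bound is provably too lossy to certify recurrence for \emph{any} $\mu$, so the heart of the proof is missing. In your comparison process, each frog woken at $w$ makes a single downward excursion, wakes at most one fresh child subtree, and the frogs returned from a child subtree are only allowed to attempt the up-crossing to $u$. Let $m$ denote the mean of the truncated up-crossing count, built as the increasing limit of its depth-$n$ versions. The $\Poi(\mu)$ frogs at $w$ wake at most $d\bigl(1-e^{-\mu/d}\bigr)$ child subtrees in expectation, each woken subtree returns an independent copy of the truncated count, and each returned frog crosses up with probability $1/d$; hence
\begin{align*}
  m \;\leq\; \frac{\mu}{d} + \bigl(1-e^{-\mu/d}\bigr)\,m,
  \qquad\text{so}\qquad m \;\leq\; \frac{\mu}{d}\,e^{\mu/d} \;<\;\infty
\end{align*}
for every $\mu$. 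Your lower-bounding cascade is therefore almost surely finite no matter how large $\mu$ is, and no fixed-point analysis of its generating function can land at $2.28d$ or at any other threshold. The ingredient you discard---frogs emerging from one activated subtree going on to activate \emph{other} subtrees of $w$---is exactly what this paper adds on top of \cite{HJJ2} to improve $O(d\log d)$ to $O(d)$: it is what makes the number $U$ of activated subtrees in \thref{lem:Usd} close to $d$ once $\lambda$ is of constant order, so that $\lambda\mapsto \E\bigl[\Bb\Poi(\lambda)\bigr]\approx \mu/(d+1)+\lambda$ is expanding uniformly in $\lambda$. Handling the dependence this feedback creates is the actual content of the proof: the paper does it through the self-similar model (stopping all but one frog entering each newly visited subtree, so that subtree contributions are i.i.d.), the two-case analysis of \thref{lem:case2.small,lem:U''} (a Hoeffding bound on the directly activated subtrees, then letting those subtrees' frogs activate the rest), and the criterion of \thref{thm:dominance.criterion} for a Poisson mixture to dominate a Poisson, which powers the bootstrap $\nu=\Bb\nu\stsucc\Bb\Poi(\lambda)\stsucc\Poi(\lambda+\delta)$. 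You correctly name this dependence as the main obstacle, but your proposal resolves it by deleting precisely the frogs that make the theorem true.

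Two smaller points. First, survival of a supercritical cascade gives infinitely many returns only with positive probability, while the theorem asserts almost sure recurrence; the paper's bootstrap yields $V=\infty$ a.s.\ directly, whereas your route would additionally need a zero--one law (e.g.\ \cite{01frog}). Second, both of your constants are asserted rather than derived: the quantitative work (the choices $c=3.26$ and $C=2.27$, the Hoeffding exponent $b$, and the function $h_{C,c}$ of \thref{lem:log}) is entirely absent, and for the lower bound the paper simply cites \cite[Proposition~15]{HJJ2}, a coupling with a transient branching random walk. Your $\icvprec$ scheme, resting on the unverified claim that the woken-frog count is a concave functional of the Poisson intensity and on unproven monotonicity of the recursion in that order, is both harder than the standard branching-random-walk domination and, as stated, not a proof.
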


\thref{thm:critical} can be extended to initial distributions other than Poisson using
the results of \cite{JJ3_order}. For example, the frog model on a $d$-ary tree
with deterministically many frogs per site has a critical threshold of the same order; see
\cite[Corollary~4]{JJ3_order} for details.\looseness=1

An accurate description of the transition threshold on trees is especially relevant given that the frog model on $\mathbb Z^d$ behaves rather differently. A transition still occurs, but it does so at a decaying density of frogs (see \cite[Theorem 1.1]{random_recurrence}).
A natural next step would be to investigate this problem for irregular trees. Perhaps the phase
transition is on the same order as the branching number of the tree (see \cite[Section~1.2]{LyonsPeres})?
This is completely speculative, as the mere existence of a phase transition for the frog model is
unknown even on a Galton--Watson tree.

Our other goal for this paper
is to present as simply as possible the argument for existence of a recurrence phase on trees.
While the transience phase is fairly easy to establish (see \cite[Proposition~15]{HJJ2}),
the recurrence phase is more difficult. Indeed, the question of recurrence on $\TT_2$
with one sleeping frog per site was first posed in \cite{phasetree} and was only recently answered
in the affirmative \cite{HJJ1}. It remains open to determine if the one-per-site frog model
is recurrent on $\TT_3$ and $\TT_4$.
Taking advantage of some technical improvements we have made since \cite{HJJ1,HJJ2},
we give a streamlined proof of recurrence on $\TT_2$
with i.i.d.-$\Poi(\mu)$ frogs per site (see \thref{prop:binary.recurrence}). Using
\cite[Corollary~3]{JJ3_order}, this result also implies recurrence on
$\TT_2$ with two sleeping frogs per site. This is weaker than the result in \cite{HJJ1}, but the
proof is much simpler.\looseness=1

\subsection*{Ideas of the proof}
As in \cite{HJJ1,HJJ2}, our proof of recurrence is based on
recursion and bootstrapping. To set this up, we first show that it is enough to establish recurrence
for a frog model whose paths are stopped non-backtracking walks,
which we call the \emph{self-similar frog model}.
Let $V$ be the number of visits to the root in this process. A self-similarity
 yields a relation between $V$ and a collection of independent copies of $V$. Such relations are called recursive distributional equations (see \cite{AB} for further discussion).\looseness=1


In the bootstrap part of the argument, we assume that $V$ is stochastically larger than $\Poi(\lambda)$
for some $\lambda\geq 0$. We then analyze the recursive distributional equation to show that $V$
is in fact stochastically larger than $\Poi(\lambda+\epsilon)$.
Iterating this argument starting at $\lambda=0$, we show that $V$ is larger than $\Poi(\epsilon)$,
then larger than $\Poi(2\epsilon)$, and so on, with the conclusion that $V=\infty$ a.s.
Here and in \cite{HJJ2}, this argument uses the standard stochastic order,
while in \cite{HJJ1} it uses a more exotic stochastic order.

The bootstrap phase of our argument is more elaborate than in \cite{HJJ2}, where
we could only establish $\mu_c(d)=O(d\log d)$. We obtain a better upper bound in this paper
because the version of the self-similar frog model here better approximates the actual frog model. Put more simply, we are able to capture the contributions of more frogs.
The simplification that allows us to handle the extra complexity is our use of
\cite[Theorem~3.1(b)]{MSH}, a simple criterion for determining when a Poisson distribution
stochastically dominates a Poisson mixture. This allows us to avoid the more difficult coupling argument
used in \cite{HJJ2}.

\section{A criterion for stochastic dominance}\label{sec:orders}

Given two probability measures $\nu$ and $\nu'$ on the extended real numbers,
we say that $\nu$ is \emph{stochastically smaller} than $\nu'$ if
\begin{align*}
  \nu\bigl((t,\infty]\bigr) \leq \nu'\bigl((t,\infty]\bigr)
\end{align*}
for all $t\in\RR$. We denote this relationship by $\nu\stprec\nu'$.
If $X\sim \nu$ and $Y\sim\nu'$, we also write $X\stprec Y$, $X\stprec\nu'$, and
$\nu\stprec Y$ all to mean the same thing.
An alternate characterization of stochastic dominance is that $X\stprec Y$ if and only
if there exists a coupling of $X$ and $Y$ such that $X\leq Y$ a.s.

\thref{thm:dominance.criterion} provides a necessary and sufficient condition for a Poisson mixture to
dominate a Poisson distribution, which we will need in Section~\ref{sec:critical}.
We reproduce the proof given in \cite{MSH}
for our readers' convenience. See also \cite{Yu} for a more general result.
\begin{lemma}[{\cite[Lemma~3.1(b)]{MSH}}]\thlabel{lem:concavity}
  For any positive integer $n$, the function
  \begin{align*}
    h_n(x) = x\sum_{k=0}^n \frac{(-\log x)^k}{k!}
  \end{align*}
  is increasing and concave on $(0,1]$.
\end{lemma}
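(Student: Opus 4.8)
The plan is to prove that $h_n(x) = x\sum_{k=0}^n \frac{(-\log x)^k}{k!}$ is increasing and concave on $(0,1]$ by direct differentiation, exploiting the telescoping structure that arises when one differentiates a truncated exponential series. First I would substitute $u = -\log x$, so that $x = e^{-u}$ with $u \in [0,\infty)$, which turns the awkward logarithm into a clean variable; under this substitution $h_n$ becomes $g(u) = e^{-u}\sum_{k=0}^n \frac{u^k}{k!}$, the survival function of a Poisson-type partial sum. Differentiating with the chain rule, or equivalently differentiating $h_n$ in $x$ directly, the key observation is that the derivative of the truncated series produces a massive cancellation: terms of consecutive degree cancel, leaving only the single highest-order term. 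Concretely, I expect
\begin{align*}
  h_n'(x) = \frac{(-\log x)^n}{n!},
\end{align*}
because $\frac{d}{dx}\bigl[x \frac{(-\log x)^k}{k!}\bigr] = \frac{(-\log x)^k}{k!} - \frac{(-\log x)^{k-1}}{(k-1)!}$, and summing over $k=0,\dots,n$ telescopes down to $\frac{(-\log x)^n}{n!}$.

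Once this clean formula for the derivative is in hand, both claims follow almost immediately. Since $-\log x \geq 0$ for $x \in (0,1]$, the derivative $h_n'(x) = (-\log x)^n/n!$ is nonnegative, so $h_n$ is increasing. For concavity I would differentiate once more: $h_n''(x) = \frac{d}{dx}\frac{(-\log x)^n}{n!} = -\frac{n(-\log x)^{n-1}}{n!\,x} = -\frac{(-\log x)^{n-1}}{(n-1)!\,x}$, which is $\leq 0$ on $(0,1]$ because $-\log x \geq 0$ and $x > 0$. Hence $h_n$ is concave. (A small care point: at $x=1$ the derivative is $0$ when $n\geq 1$, and the endpoint behavior as $x\to 0^+$ should be checked, but $x(-\log x)^k \to 0$ there, so $h_n$ extends continuously.)

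I expect the only real obstacle to be verifying the telescoping cleanly, i.e.\ confirming that the product-rule computation genuinely collapses the whole sum to a single term rather than leaving boundary remainders. The mechanism is that the $k=0$ term contributes only the constant $1$ whose derivative is the "lower" piece for $k=1$, and each intermediate positive term is cancelled by the negative piece coming from the next index, so only the top term $\frac{(-\log x)^n}{n!}$ survives. I would write out the general term $\frac{d}{dx}\bigl[x\frac{(-\log x)^k}{k!}\bigr]$ explicitly, using $\frac{d}{dx}(-\log x)^k = -k(-\log x)^{k-1}/x$, to make the cancellation transparent before summing. Everything else is a one-line sign check, so the proof should be short.
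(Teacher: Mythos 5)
Your proposal is correct and matches the paper's proof essentially verbatim: both compute $h_n'(x)=\frac{(-\log x)^n}{n!}$ via the same telescoping product-rule calculation and then read off monotonicity and concavity from the sign and monotonicity of this derivative. Your explicit second-derivative computation $h_n''(x)=-\frac{(-\log x)^{n-1}}{(n-1)!\,x}\leq 0$ is just a slightly more detailed way of saying, as the paper does, that $h_n'$ is decreasing on $(0,1]$.
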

\begin{proof}
  We compute
  \begin{align*}
    h_n'(x) &= \sum_{k=0}^n \frac{(-\log x)^k}{k!} -\sum_{k=1}^n \frac{(-\log x)^{k-1}}{(k-1)!}
       = \frac{(-\log x)^n}{n!},
  \end{align*}
  which is positive and decreasing on $(0,1]$, showing that $h_n(x)$
  is increasing and concave.
\end{proof}

\begin{thm}[{\cite[Theorem~3.1(b)]{MSH}}]\thlabel{thm:dominance.criterion}
  Let $X\sim\Poi(\lambda)$, and let $Y\sim\Poi(U)$ for some nonnegative random variable~$U$.
  Then the following are equivalent:
  \begin{enumerate}[(i)]
    \item $X\stprec Y$, \label{i:st}
    \item $\P[X=0]\geq \P[Y=0]$, and \label{i:zero.prob}
    \item $\lambda\leq -\log\E e^{-U}$. \label{i.log}
  \end{enumerate}
\end{thm}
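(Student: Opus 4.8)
The plan is to settle the two easy equivalences first and then concentrate on the single substantive implication. The equivalence of (ii) and (iii) is a one-line computation: since $\P[X=0]=e^{-\lambda}$ and, conditioning on $U$, $\P[Y=0]=\E\bigl[\P[Y=0\mid U]\bigr]=\E e^{-U}$, the inequality $\P[X=0]\geq\P[Y=0]$ is literally $e^{-\lambda}\geq \E e^{-U}$, which upon taking logarithms becomes $\lambda\leq-\log\E e^{-U}$. The implication (i)$\Rightarrow$(ii) is immediate from the definition of stochastic dominance evaluated at $t=0$: $X\stprec Y$ gives $\P[X>0]\leq\P[Y>0]$, hence $\P[X=0]\geq\P[Y=0]$. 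So everything reduces to showing (iii)$\Rightarrow$(i).

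For this last step I would first recast dominance in terms of distribution functions. Since $X$ and $Y$ are $\{0,1,2,\dots\}$-valued (allowing $Y=+\infty$ when $U=\infty$), the relation $X\stprec Y$ is equivalent to $\P[X\leq n]\geq\P[Y\leq n]$ for every integer $n\geq 0$. The key observation is that the function $h_n$ of \thref{lem:concavity} is exactly the Poisson tail sum in disguise: substituting $x=e^{-s}$ with $s\geq 0$ gives $h_n(e^{-s})=e^{-s}\sum_{k=0}^n s^k/k!=\P\bigl[\Poi(s)\leq n\bigr]$. Consequently $\P[X\leq n]=h_n(e^{-\lambda})$, and conditioning on $U$ yields $\P[Y\leq n]=\E\bigl[h_n(e^{-U})\bigr]$.

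Now the two conclusions of \thref{lem:concavity} do complementary jobs. Because $h_n$ is concave on $(0,1]$, Jensen's inequality gives $\E\bigl[h_n(e^{-U})\bigr]\leq h_n\bigl(\E e^{-U}\bigr)$. Because $h_n$ is increasing and hypothesis (iii) is the same as $e^{-\lambda}\geq\E e^{-U}$, we also have $h_n\bigl(\E e^{-U}\bigr)\leq h_n(e^{-\lambda})$. Chaining these, $\P[Y\leq n]\leq h_n(e^{-\lambda})=\P[X\leq n]$ for every $n$, which is exactly the claimed dominance. This closes the cycle (i)$\Rightarrow$(ii)$\Leftrightarrow$(iii)$\Rightarrow$(i).

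With \thref{lem:concavity} already in hand there is no real obstacle, so the work is in arranging the pieces correctly rather than in any hard estimate. The one point that needs care is the identification $\P[Y\leq n]=\E[h_n(e^{-U})]$ together with the fact that $e^{-U}$ takes values in $(0,1]$ (or at $0$ when $U=\infty$), so that Jensen and the monotonicity of $h_n$ apply on the correct domain; setting $h_n(0)=0$ makes the boundary case $U=\infty$ consistent, since there $\P[Y\leq n\mid U]=0$.
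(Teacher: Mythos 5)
Your proposal is correct and follows essentially the same route as the paper's proof: the same trivial handling of (i)$\Rightarrow$(ii) and (ii)$\Leftrightarrow$(iii), and for (iii)$\Rightarrow$(i) the same identification $\P[Y\leq n]=\E\,h_n(e^{-U})$ followed by Jensen's inequality (concavity of $h_n$) and then monotonicity of $h_n$. Your extra remark about extending $h_n$ continuously by $h_n(0)=0$ to cover $U=\infty$ is a minor refinement the paper omits, but it does not change the argument.
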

\begin{proof}
  Conditions~\ref{i:zero.prob} and~\ref{i.log} are just restatements
  of each other, since $\P[X=0]=e^{-\lambda}$ and $\P[Y=0]=\E e^{-U}$.
  Condition~\ref{i:st} implies \ref{i:zero.prob}
  by the definition of stochastic dominance.
  It remains to prove that \ref{i.log}
  implies \ref{i:st}.
  It suffices to show that $\P[Y\leq n]\leq \P[X\leq n]$ for all nonnegative
  integers $n$. We compute
  \begin{align*}
    \P[Y\leq n] &=\sum_{k=0}^n \E\Biggl[\frac{e^{-U} U^k}{k!}\Biggr]=
      \E\Biggl[\zeta\sum_{k=0}^n \frac{(-\log \zeta)^k}{k!}\Biggr] = \E h_n(\zeta),
  \end{align*}
  where $\zeta=e^{-U}$. Our assumption is that $\E\zeta=\E e^{-U}\leq e^{-\lambda}$.
  By Lemma~\ref{lem:concavity}, the function $h_n(x)$ is increasing and concave
  on $(0,1]$, where $\zeta$ takes values. Thus
  \begin{align*}
    \E h_n(\zeta)&\leq h_n(\E\zeta)\leq h_n\bigl(e^{-\lambda}) = \sum_{k=0}^n \frac{e^{-\lambda}\lambda^k}{k!}
      =\P[X\leq n],
  \end{align*}
  where we use that $h_n$ is concave to apply Jensen's inequality in the first step,
  and we use that $h_n$ is increasing in the second step.
\end{proof}

\section{Critical parameters for $d$-ary trees}\label{sec:critical}

Our argument breaks down into two parts described in the introduction.
In Section~\ref{sec:recall}, we define the self-similar frog model and define $V$ as the number
of visits to the root in this process. Then, we determine a recursive distributional equation satisfied by
the law of $V$ (see \thref{lem:fixed}).
The ideas in this section can be found in \cite{HJJ1,HJJ2}, but they take some work to extract
in the form we need.
Though we do our best to avoid duplicating material, when in doubt we have
opted for comprehensibility over efficiency.

Next comes the bootstrap portion of the argument.
In Section~\ref{sec:simplest}, we use the set-up of Section~\ref{sec:recall} to carry this out in case
of the binary tree, giving a short proof of the existence of a recurrence phase.
In Section~\ref{sec:bootstrap} we give a more complex version of this argument
proving recurrence on the $d$-ary tree for $\mu=\Omega(d)$.

\subsection{The bootstrapping set-up} \label{sec:recall}


\subsubsection{The self-similar frog model}

Many basic features of the frog model depend only on the range of each frog. This yields rather nice abelian and monotonicity properties. For example, the total number of visits to the root is unaffected by the order frogs wake up in and the rate they reveal vertices in their ranges.
Also, trimming the range of frogs can only reduce the number of visits to the root.
Applying this observation in combination with the coupling characterization of stochastic dominance,
we note the following fact. Let $r(\eta, S)$ be the number of visits to the root in the frog
model $(\eta,S)$.

\begin{fact} \thlabel{fact:mono}
Consider a collection of frog paths $S = \bigl(S_\proc(v,i)\bigr)_{v \in G, i \geq 1}$ on a graph~$G$.
Suppose that another collection of paths $ \tilde  {S}$ can be coupled with $S$
such that for all $i$ and $v$, the range of $\tilde S_\proc(v,i)$ is a subset
of the range of $S_\proc(v,i)$. Then $r(\eta,  \tilde {S}) \stprec r(\eta,S)$.
\end{fact}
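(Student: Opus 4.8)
The plan is to establish the claim via the coupling characterization of stochastic dominance stated at the start of Section~\ref{sec:orders}, exploiting the abelian and range-monotonicity features of the frog model described just above the statement. Since $r(\eta,\tilde S)$ and $r(\eta,S)$ are real-valued random variables (possibly infinite), it suffices to produce a single coupling under which $r(\eta,\tilde S) \leq r(\eta,S)$ almost surely. The hypothesis already hands me a coupling of the two path collections $\tilde S$ and $S$ on a common probability space under which, for every $v$ and $i$, the range of $\tilde S_\proc(v,i)$ is contained in the range of $S_\proc(v,i)$. I would couple the frog counts $\eta$ to be identical in the two processes. The content to verify is then purely deterministic: on a fixed realization where each tilde-range is a subset of the corresponding range, the number of root visits in the tilde-process is at most the number in the untilde-process.

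First I would record precisely what ``range'' buys us. Because the model's visit counts are abelian --- insensitive to the order in which frogs are woken and the rate at which each frog reveals the vertices along its path --- I may compute $r(\eta,S)$ by the following bookkeeping: maintain a set of awoken frogs; a frog, once awake, eventually reveals its entire range; any vertex appearing in the range of an awake frog has its sleeping frogs activated. Thus the set of ever-activated frogs, and hence the multiset of root visits, is determined by the ranges alone. I would make this rigorous by defining an ``activation closure'': starting from the root frog, take the smallest set $\Aa$ of (vertex, index) pairs that is closed under the rule that if $(v,i)\in\Aa$ then every frog started at a vertex lying in the range of $S_\proc(v,i)$ also belongs to $\Aa$. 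The number of root visits is then the number of pairs $(v,i)\in\Aa$ whose range contains the root, counted with the multiplicity with which that path visits the root.

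The key monotonicity step is a containment of activation closures. Let $\Aa$ and $\tilde\Aa$ be the closures built from $S$ and $\tilde S$ respectively, with identical $\eta$. I claim $\tilde\Aa\subseteq\Aa$, proved by induction on the closure construction: any frog activated in the tilde-process is activated because some already-activated tilde-frog has the frog's start vertex in its tilde-range; by the subset hypothesis that vertex lies in the corresponding untilde-range, so the same frog is activated in the untilde-process, using the inductive hypothesis that the activating frog itself lies in $\Aa$. Finally, root visits are counted per activated frog by how often its path hits the root; shrinking each range can only decrease each such count (fewer visits to the root along a trimmed path) while the set of contributing frogs shrinks from $\Aa$ to $\tilde\Aa$. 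Both effects push the total down, giving $r(\eta,\tilde S)\leq r(\eta,S)$ pointwise on the coupled space, hence $r(\eta,\tilde S)\stprec r(\eta,S)$.

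The main obstacle, and the place where I would be most careful, is making the abelian property and the closure construction genuinely rigorous rather than heuristic: one must argue that the total visit count does not depend on activation order (so that the closure set is well defined and equals the true set of activated frogs), and that counting root visits as a sum over $\tilde\Aa$ of per-path hitting counts is valid even when some quantities are infinite. I would lean on the cited abelian property of the frog model to justify the order-independence, and handle the infinite case by noting monotone limits preserve the inequality. Everything downstream --- the subset induction and the termwise comparison of root-hit counts --- is routine once the closure formalism is in place.
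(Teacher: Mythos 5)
Your overall strategy coincides with the paper's: the paper gives no formal proof of \thref{fact:mono}, justifying it only by the observations in the preceding paragraph --- that the set of activated frogs is a function of the ranges alone, that this set is monotone under shrinking ranges, and that the coupling characterization of stochastic dominance converts a pointwise inequality into $\stprec$. Your activation closure $\Aa$, the identification (via the abelian property) of the activated frogs with $\Aa$, and the induction giving $\tilde\Aa\subseteq\Aa$ are exactly the right way to make that sketch rigorous.

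However, your final counting step is false as written: ``shrinking each range can only decrease each such count (fewer visits to the root along a trimmed path).'' The hypothesis gives only containment of ranges \emph{as sets}; it does not make $\tilde S_\proc(v,i)$ a sub-path of $S_\proc(v,i)$, and a range carries no multiplicity information. Concretely, on the ray $\{0,1,2,\ldots\}$ rooted at $0$ with $\eta\equiv 0$, couple the initial frog's paths so that $S_\proc(0,1)$ marches $0\to 1\to 2\to\cdots$ while $\tilde S_\proc(0,1)$ alternates $0\to 1\to 0\to 1\to\cdots$; then the ranges satisfy $\{0,1\}\subseteq\{0,1,2,\ldots\}$, yet the root is visited infinitely often under $\tilde S$ and at most once under $S$. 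So if $r$ counts visits with multiplicity, your termwise comparison --- and indeed the claimed dominance itself --- can fail. The reading under which the fact (and your argument) is correct is that $r$ counts the frogs that ever reach the root: a frog counted by $r(\eta,\tilde S)$ lies in $\tilde\Aa\subseteq\Aa$, and the root lies in its tilde-range, hence in its larger untilde-range, so it is counted by $r(\eta,S)$; no per-path multiplicity claim is needed or available. This reading also suffices for the paper's application: in the chain through \thref{fact:coupling}, the trimmed paths $T$ are stopped on arrival at $\emptyset$, so for $(\eta,T)$ visits and visiting frogs coincide, while for $(\eta,S)$ the total number of visits dominates the number of visiting frogs, and the desired conclusion $r(\eta,T)\stprec r(\eta,S)$ follows.
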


From now on, let $S=(S_\proc(v,i),\,v\in\TT_d,i\geq 1)$ denote a collection of independent
simple random walks with $S_\proc(v,i)$ started at $v$, and let the components of
$\eta=(\eta(v))_{v\in\TT_d}$ be i.i.d.-$\Poi(\mu)$, independent of $S$.
The first step in studying the frog model $(\eta,S)$
will be to replace $S$ by a collection of paths $\rS$ to obtain $(\eta,\rS)$, which we call the
\emph{self-similar frog model} in reference to a useful property described in \thref{fact:same}.

We define $T$ in two steps. First, let $S'=(S'_\proc(v,i),\,v\in\TT_d,i\geq 1)$ denote a collection
of independent random non-backtracking walks stopped at $\emptyset$.
In more detail, call a random walk a \emph{simple random non-backtracking walk} on an arbitrary graph
if it chooses from its neighbors uniformly for its first step, and then in all subsequent steps it chooses
uniformly from its current neighbors except the one it just arrived from. We define
$S'_\proc(v,i)$ to be a simple non-backtracking random walk stopped on arrival at $\emptyset$.
The walks $S'_\proc(v,i)$ and $S_\proc(v,i)$ can be coupled
so that the range of the first is a subset of the range of the second by making $S'_\proc(v,i)$ a stopped,
loop-erased version of $S_\proc(v,i)$. This is proved in detail in \cite[Proposition~7]{HJJ1}.

Now we construct $T$ as a modification of $S'$. Each path $T_\proc(v,i)$ will be a stopped
version of $S'_\proc(v,i)$. Let $v$ be a nonroot vertex in $\TT_d$ with parent $u$. Suppose that
$v$ is visited in the frog model $(S', \eta)$ for the first time at time $j$, necessarily by one or
more frogs moving from $u$ to $v$.
Select one of these visiting frogs arbitrarily, and stop all of the other ones. (Observe that it
is irrelevant which frog is allowed to continue, so long as one views frogs as indistinguishable.)
If any frogs move from $u$ to $v$ at subsequent times, stop them at $v$ as well.
Do this for all vertices~$v\in\TT_d$, and
let $T$ be the resulting collection of stopped walks. As the range of each $T_\proc(v,i)$ is a subset
of the range of $S'_\proc(v,i)$, the following fact (also noted in \cite[Proposition~7]{HJJ1})
follows:
\begin{fact}\thlabel{fact:coupling}
  There is a coupling of $S$ and $T$ so that the range of each $T_\proc(v,i)$
  is a subset of the range of $S_\proc(v,i)$.
\end{fact}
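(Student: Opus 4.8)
The plan is to obtain the desired coupling by composing two range-shrinking couplings and then invoking transitivity of set inclusion. First I would recall that the construction of $S'$ already supplies one half of what is needed: as noted just above (and proved in \cite[Proposition~7]{HJJ1}), each $S'_\proc(v,i)$ is a stopped, loop-erased version of $S_\proc(v,i)$, so there is a coupling of $S$ and $S'$ under which the range of $S'_\proc(v,i)$ is a subset of the range of $S_\proc(v,i)$ for every $v\in\TT_d$ and $i\geq 1$, with the index pair $(v,i)$ preserved throughout. Since this coupling determines the joint law of $(S,S')$ without reference to $\eta$, I would then enlarge the probability space to carry an independent i.i.d.-$\Poi(\mu)$ field $\eta$, producing a joint coupling of $(S,S',\eta)$ in which the above range inclusion still holds.

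Next I would construct $T$ directly on this space using the stopping rule from the frog model $(S',\eta)$ described above. The key observation is that this rule only ever halts walks earlier: each $T_\proc(v,i)$ is the path $S'_\proc(v,i)$ run up to some stopping time, which may equal $0$ if the walk is stopped on arrival or $+\infty$ if it is never stopped. Hence $T_\proc(v,i)$ is an initial segment of $S'_\proc(v,i)$, and its range is contained in that of $S'_\proc(v,i)$. This inclusion holds deterministically, on every realization, so it requires no coupling of its own.

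Composing the two inclusions then yields, on the joint space, that the range of each $T_\proc(v,i)$ is a subset of the range of $S'_\proc(v,i)$, which is in turn a subset of the range of $S_\proc(v,i)$, as claimed. I do not expect a genuine obstacle here, since the statement is essentially bookkeeping on top of the construction. The only points requiring care are (i) verifying that the stopping rule defining $T$ is a measurable functional of $(S',\eta)$, so that $T$ is well defined on the joint space, and (ii) confirming that stopping a walk can only trim its range, never extend it, which is precisely what makes the second inclusion automatic rather than demanding a separate coupling argument.
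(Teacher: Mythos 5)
Your proposal is correct and is essentially the paper's own argument: the paper likewise couples $S$ with $S'$ via the stopped loop-erasure construction (citing \cite[Proposition~7]{HJJ1}) and then observes that each $T_\proc(v,i)$ is by construction a stopped version of $S'_\proc(v,i)$, so the second range inclusion holds deterministically and the two inclusions compose. Your added care about carrying $\eta$ on the joint space (since the stopping rule defining $T$ depends on the frog dynamics of $(\eta,S')$) is a point the paper leaves implicit, but it is the same proof.
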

By \thref{fact:mono,fact:coupling}, we have $r(\eta,T)\stprec r(\eta,S)$.
We will now work exclusively with the self-similar frog model, $(\eta, T)$,
and prove recurrence for it with sufficiently large $\mu$.
Unlike all other frog models considered in this paper, the frog paths $T$ are not independent
of each other nor of $\eta$, because one frog's motion in $(\eta,T)$ can cause another frog to be stopped.
This is the only form of dependence, however, and frogs that have not been stopped move independently
of each other. So, it is not a serious obstacle.

Let $V=r(\eta,T)$.
Next, we discuss a self-similarity property of the model and its consequences for $V$.
For any vertex $v\in\TT_d$, let $\TT_d(v)$ denote the subtree made up of $v$ and its descendants.
We call $\TT_d(v)$ \emph{activated} in the self-similar frog model if $v$ is ever
visited. Let $u$ be the parent of $v$.
By our construction of $T$, if $\TT_d(v)$ is activated, then there is a unique frog
that moves from $u$ to $v$, entering $\TT_d(v)$ and then never leaving it.
The frog model viewed starting from the time of activation only
at vertices $\{u\}\cup\TT_d(v)$ then looks identical to the original
self-similar frog model viewed on $\{\emptyset\}\cup\TT_d(\emptyset')$. This yields the following
fact, proved in more detail in \cite[Proposition~6]{HJJ1}.

\begin{fact}\thlabel{fact:same}
  Let $V'$ be the number of frogs that move from $v$ to its parent $u$ in the self-similar frog model.
  The distribution of $V'$ conditional on $\TT_d(v)$ being activated
  is identical to the distribution of $V$.
\end{fact}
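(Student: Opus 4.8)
The plan is to exhibit, conditional on $\TT_d(v)$ being activated, a distributional isomorphism between the self-similar frog model restricted to $\{u\}\cup\TT_d(v)$ and run from the activation time onward, and the full self-similar frog model restricted to $\{\emptyset\}\cup\TT_d(w)$, where $w$ is the child of $\emptyset$ first entered by the initial frog. Under this isomorphism the upward crossings of the edge $(v,u)$ correspond exactly to the visits to $\emptyset$, so that, conditional on activation, $V'\stackrel{d}{=}V$.

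First I would record the structural facts that make each subtree self-contained. The unique frog entering $\TT_d(v)$ from $u$ at the activation time takes its first step uniformly among the $d$ children of $v$, since it is non-backtracking and has just arrived from $u$; being a non-backtracking walk on a tree, it then proceeds strictly deeper and never returns to $v$, hence never contributes to $V'$. This mirrors the initial frog at $\emptyset$, which departs to a uniform child and never comes back, so it never contributes to $V$. Moreover, the construction of $T$ ensures that no frog other than this entering frog ever penetrates the interior of $\TT_d(v)$: every frog that moves from $u$ to $v$ at a later time is stopped at $v$. The same holds at the root, where returning frogs are stopped at $\emptyset$ and, being non-backtracking, never re-enter $\TT_d(w)$; in particular no subtree of $\emptyset$ other than $\TT_d(w)$ is ever activated, so $V$ equals the number of crossings $w\to\emptyset$. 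Consequently both $V$ and $V'$ depend only on the trajectory of the relevant entering frog and on the sleeping frogs located at the strict descendants of $w$, respectively of $v$, together with their walks; the sleeping frogs sitting at $u$, like the absent ones at $\emptyset$, are irrelevant.

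Next I would transfer the driving randomness across the tree isomorphism $\phi\colon\{u\}\cup\TT_d(v)\to\{\emptyset\}\cup\TT_d(w)$ determined by $u\mapsto\emptyset$ and $v\mapsto w$. Conditional on $\{\TT_d(v)\text{ activated}\}$ and on the (random) activation time, the future increments of the entering frog form an independent non-backtracking walk with a uniform first step, and the sleeping-frog counts at the strict descendants of $v$ are still i.i.d.-$\Poi(\mu)$ with independent associated walks, having been untouched up to the activation time. Under $\phi$ these objects have exactly the joint law of the initial frog's walk and of the i.i.d.-$\Poi(\mu)$ configuration at the strict descendants of $w$ in the original model, and the stopping rules defining $T$ are preserved by $\phi$. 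It follows that the two restricted processes are equal in distribution and that the crossings $v\to u$ correspond to the visits to $\emptyset$, which gives the claim.

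The step I expect to be the main obstacle is the strong-Markov argument underlying the previous paragraph: because the paths $T$ are not mutually independent — one frog's motion can cause another to be stopped — I must verify that conditioning on $\{\TT_d(v)\text{ activated}\}$ and on the activation time does not bias the internal randomness of the subtree. The way I would handle this is to observe that the activation event and the activation time are measurable with respect to the evolution outside the interior of $\TT_d(v)$ together with the arrival of the entering frog at $v$, a $\sigma$-field independent of the sleeping frogs at the strict descendants of $v$ and of the entering frog's post-arrival increments. Making this decomposition of the randomness precise is the only genuinely technical point, and it is exactly what is carried out in \cite[Proposition~6]{HJJ1}.
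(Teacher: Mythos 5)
Your argument is essentially the paper's own: the paper likewise notes that the stopping rules in the construction of $T$ make $\{u\}\cup\TT_d(v)$ self-contained once activated (a unique entering frog, all later arrivals from $u$ stopped at $v$), identifies the post-activation process there with the self-similar model on $\{\emptyset\}\cup\TT_d(\emptyset')$, and defers the precise strong-Markov/measurability decomposition to \cite[Proposition~6]{HJJ1}, exactly as you do in your final paragraph.

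One bookkeeping slip should be corrected. You repeatedly take the driving randomness inside the subtree to be the sleeping frogs at the \emph{strict} descendants of $v$ (resp.\ of $w$), but the frogs sleeping at $v$ itself matter: they are woken at the activation time, and each of them steps directly to $u$ with probability $1/(d+1)$, contributing to $V'$ --- just as the $\Poi(\mu)$ frogs at $\emptyset'$ contribute directly to $V$. As written, neither $V'$ nor $V$ is a function of the data you list, so the transfer of laws across $\phi$ does not literally yield the claim. The repair is immediate and changes nothing structurally: include $v\mapsto w$ in the correspondence of frog configurations. Since $v$ is first visited exactly at the activation time, $\eta(v)$ is untouched before then, hence still $\Poi(\mu)$ and independent of the $\sigma$-field generated by the outside evolution that you condition on; it matches $\eta(\emptyset')$ in law, with the woken frogs' non-backtracking walks corresponding under $\phi$ as before. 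The same inclusion is needed in your last paragraph, where the independence claim must cover $\eta(v)$ and the walks of the frogs at $v$ as well.
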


The following observation shows that once a subtree $\TT_d(v)$ is activated, the random variable
$V'$ defined in the above fact is independent of the frog model outside of $\TT_d(v)$.
\begin{fact}\thlabel{fact:independence}
  Let $V'$ be defined as in \thref{fact:same}. Conditional on $\TT_d(v)$ being activated, $V'$ depends only
  on the path of the activator and on $\{T_\proc(w,i), \eta(w)\colon \,w\in\TT_d(v),i\geq 1\}$.
\end{fact}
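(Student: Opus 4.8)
The plan is to show that, once $\TT_d(v)$ is activated, the subtree evolves as a self-contained system fed only by the activator, so that $V'$ can be read off from the internal randomness together with the activator's trajectory. The fact underlying everything is a consequence of non-backtracking on a tree: since $\TT_d$ has no cycles, a simple non-backtracking walk on it is \emph{self-avoiding}, because any revisit to a vertex would force the walk to traverse some edge and then, at a later turnaround, reverse it, contradicting the non-backtracking rule. In particular each path $S'_\proc(w,i)$, and hence each stopped path $T_\proc(w,i)$, never revisits a vertex; so once a frog passes from $u$ to $v$ it cannot return to $u$ without first revisiting $v$, which it cannot do.

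Next I would use this to establish that $\TT_d(v)$ is a closed system apart from the activator. Conditional on activation, the activator is the unique frog stepping from $u$ to $v$ that continues; by the previous paragraph it then remains in $\TT_d(v)$ forever, never returning to $v$, and in particular never contributing to $V'$. Every other frog that ever steps from $u$ to $v$ is stopped at $v$ by the construction of $T$; such a frog finds no sleeping frogs at $v$ (those $\eta(v)$ frogs were activated when the activator arrived) and makes no further move, so it activates no frog of $\TT_d(v)$ and never steps from $v$ to $u$. Hence no frog besides the activator enters $\TT_d(v)$ in a way that affects its dynamics. Symmetrically, a frog counted by $V'$ arrives at $u$ coming from $v$ and, being non-backtracking, cannot step back into $\TT_d(v)$, so no frog re-enters the subtree. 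Thus the activations, moves, and stopping events occurring at vertices of $\TT_d(v)$ are generated entirely by the activator's arrival at $v$ and the internal randomness.

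Making this precise, I would describe the evolution inside the subtree as a frog process on $\{u\}\cup\TT_d(v)$ in which the activator is the single initially moving particle entering at $v$, the sleeping particles are the $\eta(w)$ frogs for $w\in\TT_d(v)$, and the particle paths are the $T_\proc(w,i)$. The first-passage times that determine which frog activates each vertex and which arrivals are stopped are computed from the activator's path restricted to $\TT_d(v)$ (the portion from $v$ onward, which never leaves the subtree) together with these internal paths, with no reference to the configuration outside, the external $u\to v$ arrivals being inert as shown. Running this restricted process produces exactly the same crossings of the edge $\{v,u\}$ in the $v\to u$ direction as the full model, and $V'$ counts precisely these crossings. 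Therefore $V'$ is a deterministic function of the path of the activator and of $\{T_\proc(w,i),\eta(w)\colon w\in\TT_d(v),\,i\geq 1\}$, which is the assertion.

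I expect the main obstacle to be the bookkeeping in the middle step: one must verify that no external event ever perturbs the internal timing, in particular that the later $u\to v$ arrivals genuinely have no effect and that the subtree's first-passage structure is unchanged by deleting everything outside $\{u\}\cup\TT_d(v)$. Once the self-avoidance of non-backtracking walks is in hand this reduces to checking the stopping rule case by case, but it is the point where the argument would break if a frog could loop back into the subtree.
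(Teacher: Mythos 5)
Your proposal is correct and follows essentially the same route as the paper, which states this fact as an observation from the construction of $T$: the unique activator enters $\TT_d(v)$ and never leaves (non-backtracking walks on trees are self-avoiding), all other frogs crossing from $u$ to $v$ are stopped and inert, and the single connecting edge $\{u,v\}$ means the subtree then evolves autonomously from the activator's path and the internal $\eta(w)$ and $T_\proc(w,i)$. Your write-up simply makes explicit the details the paper leaves implicit (and defers to \cite[Proposition~6]{HJJ1}); the only point worth tightening is the self-avoidance argument, which is cleanest via a minimal-excursion argument: a first revisit on a tree yields a closed walk with no repeated interior vertices, hence of length two, which is an immediate backtrack.
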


We will use \thref{fact:same,fact:independence} to express $V$ recursively in terms of independent copies of itself,
an idea expressed in Figure~\ref{fig:1}.
This relation will be given in terms of an operator we define next.

\begin{figure}
  \begin{center}
    \begin{tikzpicture}[xscale=3,yscale=2,vert/.style={circle,fill,inner sep=0,
    minimum size=0.125cm,draw},
    active/.style={rectangle, fill, inner sep=0, minimum size=0.25cm,draw},
    sleeping/.style={rectangle, inner sep=0, minimum size=0.25cm,draw}]

\path (0,0) node[active,label=below:$\rootvertex'$] (root') {}
            (-1,.2) node[vert,label=below:$\rootvertex$] (root) {}
            (1,-0.8) node[active,label=below:$v_1$] (v1) {}
            (1,-0.15) node[sleeping,label=below:$v_2$] (v2) {}
            (1,.8) node[sleeping,label=above:$v_d$] (vd) {}
            (2,1) node (vd1) {}
            (2,.6) node (vd2) {}
            (2,.05) node (v21) {}
            (2,-.35) node (v22) {}
            (2,-.6) node (v11) {}
            (2,-1) node (v12) {}
            (1,.4) node {$\vdots$};

\path       (1.6,.8) node[label = right:$\TT_d(v_d)$] (tvd) {}
            (1.6,-.8) node[label = right:$\TT_d(v_1)$] (tv1) {}
            (1.6,-.15) node[label = right:$\TT_d(v_2)$] (tv2) {}
            ;

      \draw [thick, <-, shorten >= 0.1cm, shorten <= .15cm] (root) to[out=50,in=100]  node[black, above] {$V$} (root');

\draw      [thick, <-, shorten >= 0.25cm, shorten <= .6cm] (root') to[out=-80,in=-150]  node[above] {$V_1$} (v1)
      ;

\draw      [thick, dashed, <-, shorten >= 0.1cm, shorten <= .3cm] (root') to[out=20,in=100]  node[above] {$V_2$} (v2)
      ;

\draw      [thick, dashed, <-, shorten >= 0.5cm, shorten <= .4cm] (root') to[out=80,in=100]  node[below] {$V_d$} (vd)
      ;

      \draw[thick] (root) -- (root') -- (v1)
            (root')--(v2)
            (root')--(vd);

      \draw[thick] (vd)--(vd1)
                      (vd) -- (vd2)
                      (v2) -- (v21)
                      (v2) -- (v22)
                      (v1) -- (v11)
                      (v1) -- (v12)
                      ;

    \end{tikzpicture}
    \end{center}

\caption{%
    In the self-similar frog model on $\TT_d$, the initial frog moves from $\emptyset$ to $\emptyset'$
    to $v_1$ and then continues down the tree. The random variables $V$ and $V_1$, counting the number
    of frogs moving from $\emptyset'$ to $\emptyset$ and from $v_1$ to $\emptyset'$, respectively,
    both have distribution $\nu$ (see \thref{lem:fixed}). For $i\in\{2,\ldots,d\}$, the distribution of $V_i$
    conditional on a frog entering the subtree $\mathbb T_d(v_i)$ is also $\nu$.}
    \label{fig:1}
\end{figure}
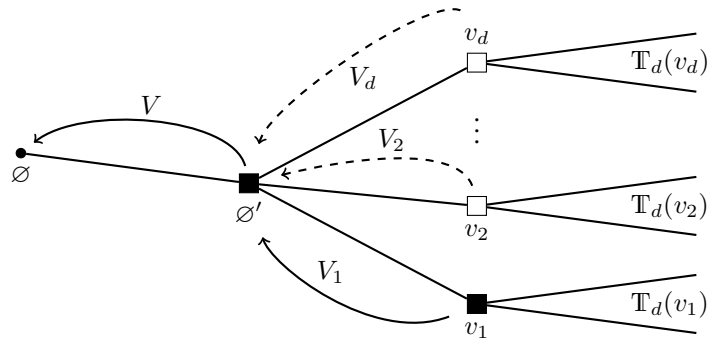


%


\subsubsection{The operators $\Bb$ and $\Uu$}\label{sec:operator}

Suppose that the initial frog in the self-similar frog model moves from $\emptyset$ to $\emptyset'$
to $v_1$. Let $v_2,\ldots,v_d$ be the remaining children of $\emptyset'$.
Observe that since frogs are stopped at $\emptyset$, no children
of $\emptyset$ other than $\emptyset'$ are ever visited.
The idea of this section is to view the self-similar frog model only at the vertices
mentioned above. If a vertex $v_i$ is visited, we close our eyes to $\TT_d(v_i)$, thinking of this
entire subtree as a black box that eventually emits some frogs from $v_i$ back to $\emptyset'$.

Enacting this view, we now define operators $\Bb$ and $\Uu$ on probability measures supported
on the extended nonnegative integers. Informally, the operator $\Bb$ corresponds to the number of visits to the root, and $\Uu$ corresponds to the number of subtrees $v_1,\hdots,v_d$ that are activated. Let $\pi$ be a probability measure on the nonnegative integers.
To define $\Bb\pi$ and $\Uu\pi$, we consider the following frog model.
The example to keep in mind is when $\pi$ is the law of $V$, in which case the following description
matches up with the black box view of the self-similar frog model described above.
\begin{description}
  \item[Graph] a star graph with center $\rho'$ and leaf vertices $\rho,u_1,\ldots,u_d$
    (think of these as paralleling $\emptyset'$ and $\emptyset,v_1,\ldots,v_d$). The root of the graph
    is $\rho$.
  \item[Sleeping frog counts] all independent, distributed as $\Poi(\mu)$
    at $\rho'$ and as $\pi$ at $u_1,\ldots,u_d$. There is one frog at $\rho$, as is always true
    at the root vertex.
  \item[Paths] All frogs have independent paths.
    The initial frog moves deterministically from $\rho$ to $\rho'$ to $u_1$ and then remains there.
    All other frogs, if woken, perform simple random non-backtracking walks from their starting points, stopped
    on arrival at a leaf vertex.
\end{description}
We then define two quantities:
\begin{itemize}
    \item[--] $\Bb\pi$ is the distribution of the number of frogs that terminate at $\rho$.
    \item[--] $\mathcal U \pi$ is the distribution of the final number of $u_1,\hdots, u_d$ that are visited by a frog.\vspace{-1pt}
\end{itemize}
Note that our definition of the initial frog path as deterministic is just for convenience.
By symmetry, we would arrive at the same measures $\Bb\pi$ and $\Uu\pi$
if it were also defined as a stopped simple random non-backtracking walk.

We mention that $\Bb$ is closely related to the operators $\Aa$
defined in \cite{HJJ1} and \cite{HJJ2}, but differs from both of them.
The operator $\Aa$ in \cite{HJJ1} is the same as $\Bb$ in the $d=2$ case if the initial distribution at
$\rho'$ in the definition of $\Bb$ is changed from $\Poi(\mu)$ to $\delta_1$  (except that $\Aa$ acts
on probability generating functions rather than distributions). The operator $\Aa$ in \cite{HJJ2}
is the same as $\Bb$ in the $d=2$ case. For $d\geq 3$, the two operators differ in that
in the frog model defining $\Aa$, frogs initially at $v_2,\ldots,v_d$
do not wake other frogs.

Now we relate this system back to the frog model.
\begin{lemma}\thlabel{lem:fixed}
Let $\nu$ be the law of $V=r(\eta,T)$, the number of visits to the root in the self-similar frog model on $\mathbb T_d$. It holds that $\mathcal B \nu = \nu$.
\end{lemma}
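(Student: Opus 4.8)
The plan is to show that $\nu$ is a fixed point of $\Bb$ by exhibiting the star-graph frog model that defines $\Bb\nu$ as a faithful ``black box'' rendering of the self-similar frog model on $\TT_d$, viewed only at the vertices $\emptyset,\emptyset',v_1,\ldots,v_d$. First I would condition on the initial frog taking the path $\emptyset\to\emptyset'\to v_1$. Since $\emptyset$ has $d$ children and a non-backtracking walk from $\emptyset'$ never returns to $\emptyset$ on its first step, the identities of $\emptyset'$ and $v_1$ are immaterial by symmetry, and the remark following the definition of $\Bb$ lets me replace the random initial path by the deterministic one used there. Because frogs are stopped on arrival at $\emptyset$ and the initial frog visits only the child $\emptyset'$ of $\emptyset$, no other child of $\emptyset$ is ever activated, so $V$ equals the number of frogs that move from $\emptyset'$ to $\emptyset$, matching the count of frogs terminating at $\rho$ in the $\Bb$ model.

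Next I would catalog the frogs that ever reach $\emptyset'$: the $\Poi(\mu)$ frogs initially sleeping there (woken by the initial frog), together with the frogs emitted back to $\emptyset'$ from the activated subtrees $\TT_d(v_1),\ldots,\TT_d(v_d)$. The subtree $\TT_d(v_1)$ is activated outright by the initial frog, so by \thref{fact:same} it emits a number $V_1$ of frogs distributed exactly as $V$, i.e.\ as $\nu$; for $i\geq2$, the subtree $\TT_d(v_i)$ is activated precisely when a frog first reaches $v_i$, and conditional on that event it emits $V_i\sim\nu$ frogs. By \thref{fact:independence}, each $V_i$ depends only on its activator's entry and on the frog data internal to $\TT_d(v_i)$, so the $V_i$ are mutually independent and independent of the $\Poi(\mu)$ frogs and of the walks performed at $\emptyset'$. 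Every frog reaching $\emptyset'$ then performs a non-backtracking walk from $\emptyset'$, stepping either to $\emptyset$ (where it stops and is counted by $V$) or to some $v_j$ (activating $\TT_d(v_j)$ if this is $v_j$'s first visit).

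I would then match this description term by term against the $\Bb\nu$ model under $\rho'\leftrightarrow\emptyset'$, $\rho\leftrightarrow\emptyset$, $u_i\leftrightarrow v_i$: the $\Poi(\mu)$ frogs at $\rho'$ correspond to those at $\emptyset'$, and the $\nu$-frogs sleeping at each $u_i$ correspond to the emitted batch $V_i\sim\nu$, which by the independence above are i.i.d.-$\nu$ and materialize exactly when $u_i$ (resp.\ $\TT_d(v_i)$) is first visited. The one point to reconcile is timing: a real subtree releases its frogs gradually, whereas the batch at $u_i$ wakes all at once. Here \thref{fact:independence} guarantees that the \emph{total} emission $V_i$ is a fixed random variable independent of the external dynamics, while the abelian property recorded at the start of Section~\ref{sec:recall} guarantees that the number of visits to the root is unaffected by the order in which frogs wake and move; together these show the count at $\rho$ has the same law whether each $V_i$ is released gradually or all at once. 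Hence the number of frogs terminating at $\rho$ in the $\Bb\nu$ model has law $\nu$, that is, $\Bb\nu=\nu$.

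The step I expect to be most delicate is this last one: verifying that the star-graph model is genuinely distributionally identical to the black-box view rather than merely analogous. Care is needed because the construction of $T$ stops all but one frog entering each subtree, so the frogs travelling from $\emptyset'$ into $\TT_d(v_i)$ are \emph{not} the frogs that later emerge. One must check that it is the emerging count $V_i$, and not the entering frogs, that corresponds to the sleeping $\nu$-frogs at $u_i$, and that the entering frogs, stopped deep in the tree, contribute nothing beyond triggering the single activation that the first arrival at $u_i$ likewise triggers in the star model.
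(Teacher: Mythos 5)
Your proposal is correct and takes essentially the same approach as the paper: both treat the self-similar model as a black box at $\emptyset,\emptyset',v_1,\ldots,v_d$, match it with the star-graph model defining $\Bb\nu$ via \thref{fact:same,fact:independence}, and use the abelian property to dispose of the timing discrepancy between gradual emission from subtrees and simultaneous waking at the $u_i$. The only difference is bookkeeping: where you assert outright that the emission counts $V_i$ are mutually independent and independent of the frogs at $\emptyset'$ (a claim that needs care, since whether $\TT_d(v_i)$ is ever activated depends on the other subtrees' emissions), the paper makes this precise by revealing activations in rounds $\Vset_1,\Vset_2,\ldots$ and coupling the newly activated subtrees' emissions with the corresponding star-graph counts at each round, conditioning on the information revealed so far.
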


\begin{proof}
  Essentially, the frog model on the star graph exactly matches the black box view of the self-similar frog
  model described at the beginning of Section~\ref{sec:operator}, and the result then follows
  from \thref{fact:same,fact:independence}.
  To make this more formal, we couple the two frog models. We take full advantage of the abelian
  properties of the frog model by viewing the frogs' motions in a convenient order.

  Consider the frog model used to define $\Bb\nu$ as well as the self-similar frog
  model.
  We can couple the initial number of frogs on $\rho'$ to be the same as on
  $\emptyset'$,
  and we can couple the first (and only) step of each frog at $\rho'$ with the first step
  of the corresponding frog at~$\emptyset'$.\looseness=-1

  Let $V_i$ be the number of frogs that ever move from $v_i$ to $\emptyset'$ in the self-similar model,
  and let $U_i$ be the number of frogs initially at $u_i$ in the star graph model.
  Noting that $\TT_d(v_1)$ is activated by the initial frog, $V_1\sim\nu$ by \thref{fact:same}. By
  \thref{fact:independence}, $V_1$ is independent of all that we have coupled so far (that is, the number
  and first steps of frogs initially at $\emptyset'$). The random variable $U_1$ is also independent
  of all we have coupled so far and is distributed identically to $V_1$.
  We can therefore couple $U_1$ and $V_1$ to be equal.
  Next, we couple the second (and final) step
  of each frog at $u_1$ with the step of the corresponding frog counted by $V_1$ after it moves from
  $v_1$ to $\emptyset'$.

  Let $\Vset_1$ consist of the indices $i\in\{2,\ldots,d\}$ such that $u_i$ has been visited so far.
  By the construction of our coupling, we can also describe $\Vset_1$ as the set of
  $i\in\{2,\ldots,d\}$ such that $\TT_d(v_i)$ has been activated so far.
  Furthermore, identically many frogs have returned so far to $\rho$ as to $\emptyset$.
  By \thref{fact:same,fact:independence}, conditional on the information so far, the random variables
  $(V_i,\,i\in\Vset_1)$ are i.i.d.-$\nu$ and are independent of the information so far, as are
  the random variables $(U_1,\,i\in\Vset_1)$. We can therefore couple these two random vectors to
  be equal. We then couple the paths of the frogs at these vertices to match up as we did
  with the frogs at $u_1$ and $v_1$.

  As above,  a vertex $u_i$ is visited for the first time in this second round if and only if
  $\TT_d(v_i)$ is visited for the first time in this second round. Let $\Vset_2$ be the set of such $i$.
  We can repeat the coupling argument of the previous paragraph, maintaining identical numbers of frogs
  terminating at $\rho$ as at $\emptyset$, until we get an empty $\Vset_j$ and have counted all
  returns to $\rho$ and $\emptyset$. Thus, under this coupling,
  the number of frogs terminating at $\rho$ in the star graph model
  is the same as the number of frogs terminating at $\emptyset$
  in the self-similar model. The first of these counts has distribution~$\Bb\nu$,
  while the second has distribution~$\nu$, showing that the two are equal.
\end{proof}

The next lemma is similar to \cite[Lemma~10]{HJJ1} and \cite[Lemma~10]{HJJ2}.
\begin{lemma}\thlabel{lem:dom}
    If $\pi_1 \stprec \pi_2$, then $\mathcal B \pi_1 \stprec \mathcal B \pi_2$.
\end{lemma}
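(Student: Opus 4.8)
The plan is to prove the statement via the coupling characterization of stochastic dominance, reducing it to a monotonicity property of the star-graph frog model in its initial frog counts. Since $\pi_1 \stprec \pi_2$, for each $i\in\{1,\ldots,d\}$ I can couple random variables $U_i^{(1)}\sim\pi_1$ and $U_i^{(2)}\sim\pi_2$ with $U_i^{(1)}\leq U_i^{(2)}$ almost surely. I then realize the two frog models defining $\Bb\pi_1$ and $\Bb\pi_2$ on a common probability space: both use the same $\Poi(\mu)$ count at $\rho'$, the same deterministic path for the initial frog, and the counts $U_i^{(j)}$ at each $u_i$, and I couple the non-backtracking paths so that every frog present in the first model is matched to a frog at the same vertex in the second model following the identical path, while the surplus $U_i^{(2)}-U_i^{(1)}$ of frogs at each $u_i$ in the second model receive fresh independent paths. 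Thus the two systems are driven by the same randomness apart from these extra frogs.

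The core of the argument is the claim that, under this coupling, every frog terminating at $\rho$ in the first model also terminates at $\rho$ in the second. I would establish this through the standard monotonicity of the frog model in its initial configuration. Running both systems in lockstep over discrete time, I argue by induction on the time step that the set of frogs activated in the first model is contained, via the matching above, in the set activated in the second, and that matched active frogs occupy the same vertex at every step. The inductive step uses that activation is triggered by a frog arriving at a vertex: a matched active frog in the first model follows the same path in the second, so it arrives at the same vertex and wakes the frogs there, and every sleeping frog at that vertex in the first model is also present in the second because $U_i^{(1)}\leq U_i^{(2)}$ (the counts at $\rho'$ and $\rho$ being identical). A frog terminates at $\rho$ only if it is woken and its path leads it to stop at $\rho$; combining the containment of activated sets with the equality of matched paths yields the claim. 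This is the same abelian and monotonicity reasoning already invoked for \thref{fact:mono}, now applied to the number of frogs rather than to the ranges of their paths.

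Granting the claim, the number of frogs terminating at $\rho$ in the first model is at most the number terminating at $\rho$ in the second, almost surely, and so by the coupling characterization of $\stprec$ we conclude $\Bb\pi_1\stprec\Bb\pi_2$.

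I expect the main obstacle to be the rigorous bookkeeping in the inductive monotonicity claim: although it is intuitively evident that extra sleeping frogs can only increase the number of visits to $\rho$, making the lockstep induction precise requires care in maintaining the matching between frogs in the two models and in checking that every activation event in the first model has its counterpart in the second. Once the containment of activated sets is secured, the remainder reduces to a direct application of the coupling characterization.
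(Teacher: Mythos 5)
Your approach is the same as the paper's: the paper's entire proof consists of coupling the two star-graph models so that the frogs of the first are a subset of the frogs of the second, ``resulting in more visits to the root,'' and your coupling (shared $\Poi(\mu)$ frogs at $\rho'$, matched frogs with identical paths at each $u_i$, fresh paths for the surplus) is exactly this. However, the detailed induction you propose carries a false invariant: it is not true that matched active frogs occupy the same vertex at every step when the two systems are run in lockstep. The surplus frogs in the second model can activate a leaf $u_i$ strictly earlier than it is activated in the first model (for instance, a surplus frog woken at time $2$ reaches $u_i$ at time $4$, while in the first model $u_i$ is first reached at time $6$ through a longer chain of activations). The matched sleeping frogs at $u_i$ then wake earlier in the second model and run strictly ahead of their counterparts along their common paths, so position equality fails and an induction carrying that invariant cannot close.

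The repair is standard, and either of two routes works. First, you can weaken the invariant to: every frog activated by time $t$ in the first model has its match activated at some time $\leq t$ in the second model, and consequently the set of vertices traversed so far by the model-one frog is contained in the set traversed so far by its match. This containment of ranges is what propagates the induction, and it suffices for the conclusion because where a frog finally stops depends only on its pre-assigned path, not on when it is woken; hence every matched frog terminating at $\rho$ in the first model terminates at $\rho$ in the second. Alternatively, you can invoke the abelian property already used for \thref{fact:mono}: the number of frogs terminating at $\rho$ is unaffected by the order in which moves are revealed, so you may schedule the second model to first execute exactly the moves of the matched frogs in the same order as the first model, and only afterwards release the surplus frogs, whose moves can only create additional activations and additional visits to $\rho$. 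With either fix your argument is complete and coincides with the paper's (which simply asserts the coupling and omits this bookkeeping).
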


\begin{proof}
    This immediately follows from the coupling definition of stochastic dominance.
  We couple the frog models defining $\Bb\pi_1$ and $\Bb\pi_2$ so that the frogs in the former
  are a subset of the frogs of the latter model, resulting in more visits to the root.
\end{proof}

Just as in \cite[Lemma 11]{HJJ2}, the operator $\mathcal B $ applied to a Poisson distribution
yields a mixture of Poisson distributions. This is a consequence of the following property, known as
\emph{Poisson thinning}:
Consider a multinomial distribution with $\Poi(\lambda)$ trials and $n$-types, each having probability $p_k$.
Then the vector of outcomes is distributed as an independent collection of $\Poi(\lambda p_k)$ random variables.

 \begin{lemma} \thlabel{lem:Usd} Let $U$ be a random variable distributed as $\mathcal U \Poi(\lambda)$.
    \begin{align}\mathcal B \Poi(\lambda) =  \Poi\left (\f \mu {d+1} + U\f {\lambda }{d}  \right).\label{eq:Usd}
\end{align}
\end{lemma}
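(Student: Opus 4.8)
The plan is to read off the dynamics on the star graph directly from the definitions of $\mathcal{B}$ and $\mathcal{U}$, and then to extract the claimed Poisson mixture by a Poisson-thinning argument together with an independence observation. First I would trace the initial frog: it moves deterministically $\rho\to\rho'\to u_1$, so $\rho'$ and $u_1$ are activated at the outset, waking the $\Poi(\mu)$ frogs at $\rho'$ and the $\Poi(\lambda)$ frogs at $u_1$. A frog woken at $\rho'$ takes one non-backtracking step to a uniformly chosen leaf and halts, so it lands on $\rho$ with probability $\frac1{d+1}$. A frog woken at an activated $u_i$ must first step to $\rho'$ and then, being forbidden to return to $u_i$, moves to one of the remaining $d$ leaves uniformly, so it lands on $\rho$ with probability $\frac1d$.

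Next I would apply Poisson thinning to each source. The $\Poi(\mu)$ frogs at $\rho'$ split into $d+1$ independent streams, one per destination leaf, the one bound for $\rho$ having law $\Poi(\frac{\mu}{d+1})$; the $\Poi(\lambda)$ frogs at each $u_i$ split into $d$ independent streams, the one bound for $\rho$ having law $\Poi(\frac{\lambda}{d})$. Writing $N_{w\to\ell}$ for the number of frogs sent from source $w$ to leaf $\ell$, the source counts are independent across $w$ and thinning makes the whole family $\{N_{w\to\ell}\}$ mutually independent. I would then separate this family into the \emph{root streams} $N_{w\to\rho}$, which are the only frogs contributing to $\mathcal{B}\Poi(\lambda)$, and the \emph{activation streams} $N_{w\to u_j}$, which are the only frogs that can wake a new subtree.

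The crux is the following independence observation. Since $\rho$ holds no sleeping frogs, frogs arriving at $\rho$ trigger no further motion; consequently the set $\mathcal{A}\subseteq\{u_1,\dots,u_d\}$ of activated leaves is produced by a monotone spreading process — seeded by the always-active vertices $\rho'$ and $u_1$ and driven by the rule ``$w$ active and $N_{w\to u_j}\geq 1$ forces $u_j$ active'' — and this process is a deterministic function of the activation streams alone. By the abelian property of the frog model this closure does not depend on the order in which we reveal moves, so I may reveal the activation streams first, fixing $\mathcal{A}$ and $U=\abs{\mathcal{A}}$, and only afterward reveal the root streams; by Poisson-thinning independence the root streams are then independent of $\mathcal{A}$.

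Finally, conditioning on $\mathcal{A}$, the number of frogs terminating at $\rho$ is exactly $N_{\rho'\to\rho}+\sum_{i\in\mathcal{A}}N_{u_i\to\rho}$, a sum of one independent $\Poi(\frac{\mu}{d+1})$ variable and $U$ independent $\Poi(\frac{\lambda}{d})$ variables, hence has law $\Poi(\frac{\mu}{d+1}+U\frac{\lambda}{d})$ and depends on $\mathcal{A}$ only through $U$. Since $U$ has law $\mathcal{U}\Poi(\lambda)$ by definition, unconditioning yields $\mathcal{B}\Poi(\lambda)=\Poi(\frac{\mu}{d+1}+U\frac{\lambda}{d})$ as a Poisson mixture. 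The step I expect to require the most care is the independence claim: I must verify that the activation streams alone determine $\mathcal{A}$, so that no root-bound frog ever influences which subtrees wake. This is exactly where the abelian property and the absence of sleeping frogs at $\rho$ are used, and it is what lets the $U$ coming from $\mathcal{U}$ appear as a factor independent of the thinned root streams.
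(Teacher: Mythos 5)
Your proof is correct and takes essentially the same approach as the paper's: Poisson thinning splits the frogs into independent per-destination streams, and the number of visits to $\rho$ becomes a sum of one $\Poi\bigl(\frac{\mu}{d+1}\bigr)$ stream and $U$ independent $\Poi\bigl(\frac{\lambda}{d}\bigr)$ streams, where $U$ is independent of those streams because the activation set is determined by the streams aimed at $u_1,\ldots,u_d$ alone. The paper compresses this independence step into a single sentence (``By Poisson thinning, the $\Poi(\lambda/d)$ terms are independent of $U$''), so your write-up is simply a more explicit justification of the same argument.
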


\begin{proof}
In the frog model defining $\Bb\Poi(\lambda)$, the number of frogs at $\rho'$ that move back to $\rho$ is distributed as $\Bin(\Poi(\mu),1/(d+1))$. By Poisson thinning, this is $\Poi(\mu/(d+1))$. Each visited $u_i$ releases $\Poi(\lambda)$ sleeping frogs. These will take a non-backtracking step back to $\rho'$, then with probability $1/d$ will move to $\rho$. Thus, each activated $u_i$ sends $\Poi(\lambda/d)$ frogs to $\rho$. It follows that
$$\mathcal B \Poi(\lambda) \sim \Poi(\mu / (d+1) ) + \textstyle \sum_{1}^{d} \ind{ u_i \text{ visited}} \Poi(\lambda/d).$$
The above sum is equal to $\sum_1^U \Poi(\lambda/d)$. By Poisson thinning, the $\Poi(\lambda/d)$ terms are independent of $U$. Applying additivity of Poisson random variables then brings us to the claimed formula.
\end{proof}

\subsection{Simplest proof of recurrence on the binary tree}\label{sec:simplest}

We now break from the main thread to give a short proof that the frog model on the binary tree with
Poisson-distributed frogs has a recurrence phase.
The idea of the argument is to use \thref{lem:Usd} to demonstrate that for
some $\delta>0$, it holds for all $\lambda\geq 0$ that $\Bb\Poi(\lambda)\stsucc\Poi(\lambda+\delta)$.
\thref{lem:fixed,lem:dom} then let us bootstrap out way to the conclusion that
 that $V$ is stochastically larger than any Poisson distribution, and hence $V=\infty$ a.s.

\begin{prop}\thlabel{prop:binary.recurrence}
  The frog model on $\TT_2$ with i.i.d.-$\Poi(\mu)$ frogs per site is recurrent for
  $\mu>3\log\bigl( (1+\sqrt{5})/2\bigr)\approx 1.4436$.
\end{prop}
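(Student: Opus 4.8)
The plan is to follow the bootstrapping recipe advertised in the section: show that for the binary tree ($d=2$) there is a fixed $\delta>0$ with $\Bb\Poi(\lambda)\stsucc\Poi(\lambda+\delta)$ for every $\lambda\geq 0$, and then invoke \thref{lem:fixed,lem:dom} to climb. The monotonicity of $\Bb$ together with the fixed-point identity $\Bb\nu=\nu$ gives the following: if $\Poi(\lambda)\stprec\nu$ then $\Poi(\lambda+\delta)\stprec\Bb\Poi(\lambda)\stprec\Bb\nu=\nu$, so starting from the trivial bound $\Poi(0)\stprec\nu$ we obtain $\Poi(k\delta)\stprec\nu$ for all $k$, whence $V\stsucc\Poi(k\delta)$ for every $k$ and therefore $V=\infty$ almost surely. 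Recurrence of the true frog model then follows because $r(\eta,T)\stprec r(\eta,S)$ would run the wrong way --- so instead I use that $V=r(\eta,T)=\infty$ forces $r(\eta,S)=\infty$ as well, since trimming ranges only decreases visits; the self-similar model visiting the root infinitely often implies the original does too.

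\emph{The computational heart} is producing the inequality $\Bb\Poi(\lambda)\stsucc\Poi(\lambda+\delta)$ on $\TT_2$. Here I would compute $\Uu\Poi(\lambda)$ explicitly: with $d=2$ there is a single non-activator child $u_2$, and it is visited exactly when at least one frog reaches it, so $\Uu\Poi(\lambda)$ is a $\{0,1\}$-valued (Bernoulli) random variable $U$ whose success probability $q(\lambda)$ is one minus the chance that no frog ever walks from $\rho'$ to $u_2$. I would then feed this $U$ into \thref{lem:Usd} to get $\Bb\Poi(\lambda)=\Poi\!\bigl(\mu/3+U\lambda/2\bigr)$, a two-point Poisson mixture with rates $\mu/3$ (prob.\ $1-q$) and $\mu/3+\lambda/2$ (prob.\ $q$). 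The natural tool to certify stochastic dominance of this mixture over a genuine Poisson is \thref{thm:dominance.criterion}: $\Poi(\lambda+\delta)\stprec\Poi(U')$ (where $U'=\mu/3+U\lambda/2$) is equivalent to $\lambda+\delta\leq -\log\E e^{-U'}$.

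\emph{The main obstacle} is to verify that the single inequality $\lambda+\delta\leq -\log\E\exp(-\mu/3-U\lambda/2)$ holds uniformly in $\lambda\geq 0$ for some $\delta>0$, precisely when $\mu$ exceeds the stated threshold $3\log((1+\sqrt5)/2)$. Writing out $-\log\E e^{-U'}=\mu/3-\log\bigl(1-q(\lambda)+q(\lambda)e^{-\lambda/2}\bigr)$, I expect the binding case to be the large-$\lambda$ limit, where $q(\lambda)\to1$ and the requirement degenerates; a careful analysis should reduce the worst case to a clean transcendental condition on $\mu$. The golden-ratio constant $(1+\sqrt5)/2$ strongly suggests that the extremal $\lambda$ satisfies a quadratic arising from the non-backtracking return probabilities on $\TT_2$ (the return chance for a non-backtracking walk from a child involves geometric-series sums that produce $\sqrt5$), so I would expect to optimize $\inf_{\lambda\geq0}\bigl[-\log\E e^{-U'}-\lambda\bigr]$, show the infimum is attained at the point forcing $\mu=3\log((1+\sqrt5)/2)$, and conclude that strict inequality $\mu>3\log((1+\sqrt5)/2)$ yields the needed positive slack $\delta$. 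The rest --- confirming $q(\lambda)$ is increasing and checking monotonicity of the objective --- is routine calculus once the correct form of $q(\lambda)$ is pinned down.
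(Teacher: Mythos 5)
Your overall plan---prove $\Bb\Poi(\lambda)\stsucc\Poi(\lambda+\delta)$ uniformly in $\lambda\geq 0$ via \thref{thm:dominance.criterion}, bootstrap with \thref{lem:fixed,lem:dom}, and pass back to the original model by the trimming/coupling inequality---is exactly the paper's, and your bootstrap logic is sound (incidentally, $r(\eta,T)\stprec r(\eta,S)$ runs the \emph{right} way: if the smaller variable is a.s.\ infinite, so is the larger). However, there is a genuine error at the computational heart. You take $\Uu\Poi(\lambda)$ to be the $\{0,1\}$-valued indicator that the non-activator child $u_2$ is visited, and feeding that into \thref{lem:Usd} you get a mixture with rates $\mu/3$ (prob.\ $1-q$) and $\mu/3+\lambda/2$ (prob.\ $q$). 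But $\Uu\pi$ counts \emph{all} visited leaves among $u_1,\ldots,u_d$, and $u_1$ is always visited by the initial frog, so in the binary case $\Uu\Poi(\lambda)\in\{1,2\}$. Concretely, the $\Poi(\lambda)$ frogs sleeping at $u_1$ are always woken and each returns to $\rho$ with probability $1/2$, contributing a baseline $\Poi(\lambda/2)$ of visits that your mixture omits. The correct mixture, as in \eqref{eq:Bb.binary}, has rates $\mu/3+\lambda/2$ and $\mu/3+\lambda$, with $q=1-\exp(-\mu/3-\lambda/2)$ (frogs from $\rho'$ reach $u_2$ with probability $1/3$, frogs from $u_1$ with probability $1/2$).

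This is not a removable slip: with your mixture the key inequality is false for large $\lambda$, no matter how large $\mu$ is. Writing $U'=\mu/3+U\lambda/2$ with your $U\in\{0,1\}$, we have $U'\leq \mu/3+\lambda/2$ pointwise, hence $-\log\E e^{-U'}\leq \mu/3+\lambda/2$, and the requirement $\lambda+\delta\leq-\log\E e^{-U'}$ fails as soon as $\lambda>2(\mu/3-\delta)$; the bootstrap then stalls near $\lambda\approx 2\mu/3$ instead of climbing to infinity, and no threshold on $\mu$ can emerge. With the corrected mixture the argument does close, and in fact your guess that the binding case is $\lambda\to\infty$ is right, but the golden ratio does not come from geometric series of non-backtracking return probabilities: the zero-mass computation gives $-\log\P[\Bb\Poi(\lambda)=0]=\lambda+\mu/3-\log\bigl(1-e^{-\mu/3-\lambda/2}+e^{-\mu/3}\bigr)\geq\lambda+\mu/3-\log\bigl(1+e^{-\mu/3}\bigr)$, and uniform positive slack is equivalent to $e^{\mu/3}>1+e^{-\mu/3}$, i.e.\ $x^2-x-1>0$ with $x=e^{\mu/3}$, which is precisely $\mu>3\log\bigl((1+\sqrt{5})/2\bigr)$.
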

\begin{proof}
  In the $d=2$ case, $\Bb\Poi(\lambda)$ is a particularly simple mixture:
  \begin{align}\label{eq:Bb.binary}
    \Bb\Poi(\lambda) =
    \begin{cases}
      \Poi\bigl( \frac{\mu}{3} + \frac{\lambda}{2}\bigr) & \text{with probability $\exp\bigl(-\frac{\mu}{3}-\frac{\lambda}{2}\bigr)$,}\\
      \Poi\bigl( \frac{\mu}{3} + \lambda\bigr) & \text{with probability $1-\exp\bigl(-\frac{\mu}{3}-\frac{\lambda}{2}\bigr)$.}
    \end{cases}
  \end{align}
  This follows from \thref{lem:Usd} once we show that
  \begin{align}\label{eq:Uu.binary}
    \Uu\Poi(\lambda) =
    \begin{cases}
      1 & \text{with probability $\exp\bigl(-\frac{\mu}{3}-\frac{\lambda}{2}\bigr)$,}\\
      2 & \text{with probability $1-\exp\bigl(-\frac{\mu}{3}-\frac{\lambda}{2}\bigr)$.}
    \end{cases}
  \end{align}
  Recall that $\Uu\Poi(\lambda)$ is the distribution of the number of vertices out of $\{u_1,u_2\}$
  that are visited in the frog model defining $\Bb$. The vertex $u_1$ is always visited by the initial
  frog in this model. Each of the $\Poi(\mu)$ frogs initially at $\rho'$ has a $1/3$ chance of visiting
  $u_2$, and each of the $\Poi(\lambda)$ frogs initially at $u_1$ has a $1/2$ chance of visiting $u_2$,
  all independently of each other. By Poisson thinning, the number of these frogs that visit $u_2$
  is distributed as $\Poi(\mu/3 + \lambda/2)$, and thus $u_2$ is visited with
  probability $1-\exp(-\mu/3-\lambda/2)$. This establishes \eqref{eq:Uu.binary} and hence
  \eqref{eq:Bb.binary}.

  \thref{thm:dominance.criterion} now instructs us to compute the probability placed on $0$
  by $\Bb\Poi(\lambda)$, which is
  \begin{align*}
    -\log\biggl[\bigl( 1 - e^{-\frac{\mu}{3}-\frac{\lambda}{2}} \bigr)e^{-\frac{\mu}{3}-\lambda}
       + e^{-\frac{2\mu}{3}-\lambda}\biggr]
      &= \lambda +\frac{\mu}{3} - \log\biggl[  1 - e^{-\frac{\mu}{3}-\frac{\lambda}{2}}
                                + e^{-\frac{\mu}{3}}\biggr] \\
      &\geq \lambda + \frac{\mu}{3} - \log\biggl[  1
                                + e^{-\frac{\mu}{3}}\biggr] >\lambda+\delta
  \end{align*}
  for some $\delta>0$ depending on $\mu$ but not on $\lambda$, under our assumption that
  $\mu>3\log\bigl( (1+\sqrt{5})/2\bigr)$.
  By \thref{thm:dominance.criterion},
  \begin{align}\label{eq:binary.bootstrap}
    \Bb\Poi(\lambda)\stsucc\Poi(\lambda+\delta)
  \end{align}
  for any $\lambda\geq 0$.

  Now, we carry out the bootstrap. Recall that $\nu$ is the distribution of $r(\eta,T)$, the number
  of visits to the root in the nonbacktracking frog model on the binary tree.
  As $\nu\stsucc\Poi(0)$, \thref{lem:dom} shows that
  $\Bb\nu\stsucc\Bb\Poi(0)$, and so $\Bb\nu\stsucc\Poi(\delta)$ by \eqref{eq:binary.bootstrap}.
  But $\nu$ is a fixed point of $\Bb$ by \thref{lem:fixed}, implying that
  $\nu\stsucc\Poi(\delta)$.
  Repeating this argument of successively applying \thref{lem:dom}, \eqref{eq:binary.bootstrap},
  and \thref{lem:fixed}, we show that $\nu\stsucc\Poi(2\delta)$, and so on. Thus
  $\nu$ is stochastically larger than all Poisson distributions, which implies $\nu=\delta_\infty$.
  Finally, recalling that $r(\eta,S)$ is the number of visits to the root in the frog model and
  that $r(\eta,S)\stsucc\nu$ by \thref{fact:mono,fact:coupling}, we can
  conclude that $r(\eta,S)=\infty$ a.s.
\end{proof}

\subsection{A more complicated bootstrap}\label{sec:bootstrap}
We now give an argument along the same lines as our proof of \thref{prop:binary.recurrence},
proving recurrence for all $d$ under assumptions on $\mu$ that are optimal up to constant
factors, as shown by the lower bound in \thref{thm:critical}. This bound follows from
\cite[Proposition 15]{HJJ2}, which is proven by coupling the frog model
with a transient branching random walk. Our contribution here is the upper bound.

Recall that $\Uu\Poi(\lambda)$ is the distribution of the number of vertices $u_1,\ldots,u_d$ visited
in the frog model on the star graph defined in Section~\ref{sec:operator}.
The essential difference between our proofs of recurrence for $\mu=\Omega(d)$ here and
for $\mu=\Omega(d\log d)$ in \cite{HJJ2} is that here we give a better lower bound on $\Uu\Poi(\lambda)$.
For a fixed $\lambda\geq 0$, we define a lower bounding random variable $U'\in\{1,\ldots,d\}$ as follows.
Consider the frog model used to define $\Bb\Poi(\lambda)$ and $\Uu\Poi(\lambda)$,
and observe how many of $u_1,\ldots,u_d$ are visited by the $\Poi(\mu)$ frogs starting at
$\rho'$. If at least $\kd$ of these vertices are visited
for a yet to be determined constant $c$,
then arbitrarily choose $\kd$ of them and allow the frogs activated there the chance
to visit the remaining $d-\kd$ vertices.
If fewer than $\kd$ vertices are visited by the frogs at $\rho'$, then recall that
$u_1$ is guaranteed to be activated by the initial frog, and
just use the frogs at $u_1$ to try to activate the remaining vertices $u_2,\ldots,u_d$.
We define $U'$ as the number of vertices out of $u_1,\ldots,u_d$ activated in the end in this scheme.
This is summarized as follows:

Let $U_1'$ be the number of vertices $u_1,\ldots,u_d$ visited by the frogs initially at
$\rho'$.
\begin{description}
  \item[Case~1]{$U_1'\geq\kd$\\
      Arbitrarily choose $\kd$ of the vertices counted by $U_1'$ and denote them by
      $\Vset\subseteq\{u_1,\ldots,u_d\}$.
      Let $U'$ be the sum of $\kd$ and the number of the remaining $d-\kd$ vertices
      $\{u_1,\ldots,u_d\}\setminus\Vset$ visited by frogs starting in $\Vset$.}
  \item[Case~2]{$U_1'<\kd$\\
      Let $U'$ equal one plus the number of number of vertices $u_2,\ldots,u_d$ visited
      by frogs returning from $u_1$.}
\end{description}
As
$U'$ counts only a subset of the full collection of activated vertices, we have $U'\stprec \Uu\Poi(\lambda)$.

We now sketch the argument in more detail. Throughout the remainder of the paper,
we will assume that $\mu=C(d+1)$ with
$C$ a yet to be determined positive constant. In \thref{lem:case2.small},
we prove that Case~2 occurs with exponentially small probability as $d$ grows.
Next, in \thref{lem:U''} we give a very explicit definition of a random variable $U''$ satisfying
$U''\stprec U'\stprec \Uu\Poi(\lambda)$.
In \thref{lem:log}, we use this lower bound together with \thref{lem:Usd}
to prove that if $V\stsucc\Poi(\lambda)$, then $V\stsucc\Poi(\lambda+\delta)$ for some $\delta>0$.
The same iterative argument used in \thref{prop:binary.recurrence} then implies that $V=\infty$ a.s.

\begin{lemma}\thlabel{lem:case2.small}
  Recall that $U_1'$ is the number of vertices $u_1,\ldots,v_d$ visited by the
  $\Poi(\mu)$ frogs initially at $\rho'$ in the frog model defining $\Bb\Poi(\lambda)$
  and $\Uu\Poi(\lambda)$. We have
  \begin{align}
    \P[U_1' < \kd ] \leq e^{ -bd }:=p, \label{eq:HD}
  \end{align}
  where $b = 2 (1 - e^{-C} - \f 1c)^2$.
\end{lemma}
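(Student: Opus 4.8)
The plan is to identify the exact law of $U_1'$ and then apply a standard lower-tail large-deviation estimate. First I would describe the mechanism producing $U_1'$: in the frog model defining $\Bb\Poi(\lambda)$, the $\Poi(\mu)$ frogs initially at $\rho'$ are all woken by the initial frog and each takes a single non-backtracking step out of the center $\rho'$. Since these frogs begin at $\rho'$ and thus have no vertex to avoid, that first step is uniform over the $d+1$ neighbors $\rho,u_1,\ldots,u_d$, after which the frog is stopped at the leaf it reaches. By Poisson thinning, the number of these frogs arriving at each $u_i$ is an independent $\Poi(\mu/(d+1))=\Poi(C)$ random variable, using $\mu=C(d+1)$. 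Hence $u_i$ is visited with probability $1-e^{-C}$, independently across $i$, so that $U_1'\sim\Bin(d,1-e^{-C})$. This is the only nontrivial structural point, and it is exactly what Poisson thinning delivers, so no coupling argument is needed.

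With the distribution in hand the bound is a Chernoff--Hoeffding estimate. Writing $q=1-e^{-C}$, the mean is $\E U_1'=dq$. I would first convert the ceiling into a clean threshold: because $U_1'$ is integer-valued and $\kd=\lceil d/c\rceil$, the event $\{U_1'<\kd\}$ forces $U_1'\le\lceil d/c\rceil-1<d/c$, whence $\P[U_1'<\kd]\le\P[U_1'\le dq-t]$ with deviation $t=d(q-1/c)=d(1-e^{-C}-1/c)$. Applying Hoeffding's inequality to the sum of $d$ independent $[0,1]$-valued indicators then gives $\P[U_1'\le dq-t]\le\exp(-2t^2/d)=\exp\bigl(-2d(1-e^{-C}-1/c)^2\bigr)=e^{-bd}$, which is precisely the claim.

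There is no deep obstacle here; the content is mostly bookkeeping. The one point requiring care is the sign of the deviation: Hoeffding's inequality applies in this direction only when $t>0$, i.e. when $1/c<1-e^{-C}$, so that the threshold $d/c$ genuinely lies below the mean $dq$. This is an implicit standing assumption on the constants $C$ and $c$ (without it the stated bound would be false, since $\P[U_1'<\kd]$ need not be small), and I would flag it explicitly at the start of the proof. Given that, the reduction of the ceiling and the single invocation of Hoeffding complete the argument.
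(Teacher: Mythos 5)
Your proposal is correct and follows essentially the same route as the paper: Poisson thinning shows each $u_i$ is independently visited with probability $1-e^{-C}$, so $U_1'\sim\Bin(d,1-e^{-C})$, and Hoeffding's lower-tail inequality with deviation $\epsilon = 1-e^{-C}-\f 1c$ gives the bound. Your explicit flagging of the implicit requirement $1-e^{-C}-\f 1c>0$ is a worthwhile point the paper leaves unstated (it is satisfied by the eventual choice $C=2.27$, $c=3.26$), but it does not change the argument.
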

\begin{proof}
It is a consequence of Poisson thinning that out of the $\Poi(\mu)$ frogs starting at $\rho'$,
independently $\Poi(\f \mu { d+1} ) = \Poi(C)$ move to each leaf $u_1,\ldots,u_d$.
Thus each vertex has an independent $1-e^{-C}$ chance of having a frog visit it from the ones
starting at $\rho'$, showing that $U_1' \sim \Bin(d,\,1-e^{-C})$.

Hoeffding's inequality tailored to a binomial distribution states that $\P[\Bin(n,p) \leq (p-\epsilon)n ]\leq \exp ( - 2 \epsilon^2 n)$ (this follows from \cite[eq.~(2.3)]{hoeffding}).
If we apply the inequality to $U_1'$ with $\epsilon =  (1 - e^{-C}) -\f 1c $, we establish
\eqref{eq:HD}.
\end{proof}

\begin{lemma}\thlabel{lem:U''}
  Let
  \begin{align}
    U'' \sim \begin{cases} \kd + \Bin\Bigl(d - \kd ,\, 1 - e^{ - \lambda /c}\Bigr) &\text{with probability $1-q$,}\\
                        1+ \Bin\Bigl(d-1,\, 1- e^{- \lambda/d} \Bigr)& \text{with probability $q$,}
  \end{cases}\label{eq:U''}
  \end{align}
  where $q=\P\bigl[U_1'<\kd\bigr]$. Then $U''\stprec U'$.
\end{lemma}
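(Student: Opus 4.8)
The plan is to exploit the fact that $U'$ and $U''$ share exactly the same two-branch mixture structure, splitting according to the event $\{U_1'\ge \kd\}$ (``Case~1'', probability $1-q$) against $\{U_1'<\kd\}$ (``Case~2'', probability $q$), with \emph{identical} mixture weights. Because the weight $q$ is common to both variables, it suffices to prove stochastic dominance branch by branch, i.e.\ to show
\[
  \bigl(U''\mid\text{Case }j\bigr)\stprec\bigl(U'\mid\text{Case }j\bigr),\qquad j\in\{1,2\}.
\]
Granting these, for every real $t$ we have
\[
  \P[U''>t]=(1-q)\,\P\bigl[U''>t\mid\text{Case 1}\bigr]+q\,\P\bigl[U''>t\mid\text{Case 2}\bigr]\le\P[U'>t],
\]
which is precisely $U''\stprec U'$. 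So I would reduce the lemma to the two conditional comparisons and treat them one at a time.

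For Case~2, recall that $U'$ is one plus the number of vertices among $u_2,\ldots,u_d$ reached by frogs returning from $u_1$. Each of the $\Poi(\lambda)$ frogs sleeping at $u_1$, once woken, performs a non-backtracking walk stepping first to $\rho'$ and then to one of the $d$ leaves $\{\rho,u_2,\ldots,u_d\}$ chosen uniformly. By Poisson thinning the number of these frogs landing on a fixed $u_j$ with $2\le j\le d$ is $\Poi(\lambda/d)$, and multinomial splitting makes these counts mutually independent across $j$; hence each $u_j$ is hit with probability $1-e^{-\lambda/d}$ independently, and the count is $\Bin(d-1,\,1-e^{-\lambda/d})$. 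Since the $u_1$-frog cloud and its walks are independent of the $\rho'$ frogs that determine $U_1'$, conditioning on $\{U_1'<\kd\}$ does not change this law. Thus $(U'\mid\text{Case 2})$ has exactly the law $1+\Bin(d-1,\,1-e^{-\lambda/d})$ of the second branch of $U''$, so the Case~2 comparison is an equality.

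For Case~1, I would condition on $\{U_1'\ge\kd\}$ together with the choice of $\Vset$, fixing a deterministic selection rule (say the $\kd$ lowest-indexed visited vertices) so that $\Vset$ is measurable with respect to the $\rho'$ frogs alone. Each of the $\kd$ vertices in $\Vset$ then releases an independent $\Poi(\lambda)$ cloud, and a frog started at $u_i\in\Vset$ lands on a fixed remaining vertex $u_j\in\{u_1,\ldots,u_d\}\setminus\Vset$ with probability $1/d$. Summing the independent $\Poi(\lambda/d)$ contributions over the $\kd$ sources gives $\Poi(\kd\lambda/d)$ frogs at $u_j$, and multinomial Poisson splitting again yields mutual independence across the $d-\kd$ remaining vertices. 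Hence $(U'\mid\text{Case 1})$ equals $\kd+\Bin(d-\kd,\,1-e^{-\kd\lambda/d})$ in law. Because $\kd=\lceil d/c\rceil\ge d/c$ forces $\kd\lambda/d\ge\lambda/c$ and thus $1-e^{-\kd\lambda/d}\ge 1-e^{-\lambda/c}$, a binomial with the larger success probability and the same number of trials stochastically dominates, so the first branch $\kd+\Bin(d-\kd,\,1-e^{-\lambda/c})$ of $U''$ satisfies $\stprec(U'\mid\text{Case 1})$.

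Combining the two branch comparisons with the mixture identity of the first paragraph finishes the argument. The routine part is the Poisson-thinning evaluation of the hitting probabilities $1-e^{-\lambda/d}$ and $1-e^{-\kd\lambda/d}$. The step I expect to require the most care, and the main obstacle, is the conditional-independence bookkeeping: one must check that the selection event $\{U_1'\ge\kd\}$ and the choice of $\Vset$ depend only on the $\rho'$ frogs, so that conditioning on them leaves the i.i.d.-$\Poi(\lambda)$ clouds at the vertices of $\Vset$ (respectively at $u_1$) and their walks undisturbed, and that multinomial splitting supplies genuine \emph{mutual} independence across the target vertices rather than mere pairwise independence, which is what licenses the binomial form of the conditional laws.
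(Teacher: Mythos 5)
Your proposal is correct and follows essentially the same route as the paper: the same case split on $\{U_1'\ge\kd\}$, the same Poisson-thinning computation giving independent $\Poi(\kd\lambda/d)$ (resp.\ $\Poi(\lambda/d)$) arrival counts, and the same binomial comparison via $\kd/d\ge 1/c$. The only cosmetic differences are that you verify the mixture-combination step by hand where the paper cites \cite[Theorem~1.A.3, (d)]{SS}, and you identify the conditional laws exactly (with the measurability of $\Vset$ made explicit) where the paper states only the needed dominance.
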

\begin{proof}
  Writing $U'\mid E$ to mean $U'$ conditioned on the event~$E$, we claim that
  \begin{align}\label{eq:case1}
    U'\mid\{U_1'\geq\kd\}\stsucc \kd + \Bin\Bigl(d - \kd ,\, 1 - e^{ - \lambda /c}\Bigr),
  \end{align}
  and
  \begin{align}\label{eq:case2}
    U'\mid\{U_1'<\kd\}\stsucc 1+ \Bin\Bigl(d-1,\, 1- e^{- \lambda/d} \Bigr).
  \end{align}
  The lemma then follows because conditional stochastic dominance implies
  stochastic dominance \cite[Theorem~1.A.3, (d)]{SS}.

  Thus it just remains to confirm \eqref{eq:case1} and \eqref{eq:case2}.
  Suppose $U_1'\geq\kd$. Then we are in Case~1, and $U'=\kd + U_2'$,
  where $U_2'$ is the number of vertices
  in $\{u_1,\ldots,u_d\}\setminus\Vset$ visited by frogs returning from $\Vset$.
  Conditional on $\Vset$, the counts of frogs proceeding from $\Vset$ to each of
  $\{u_1,\ldots,u_d\}\setminus\Vset$ form a collection of independent $\Poi(\lambda\kd/d)$ random variables.
  Thus each vertex in $\{u_1,\ldots,u_d\}\setminus\Vset$ has an independent probability
  of $1-e^{-\lambda\kd/d}\geq 1-e^{-\lambda/c}$ of being visited by one of these frogs, showing that
  $U_2'\stsucc\Bin(d-\kd,1-e^{-\lambda/c})$ and confirming \eqref{eq:case1}.

  Next, suppose that $U_1'<\kd$, and Case~2 is in effect.
  In this case, $U'=1+U_2'$, where $U_2'$ is the number of
  vertices $u_2,\ldots,u_d$ visited by frogs returning from $u_1$.
  By the same reasoning as in the previous case, $U_2'\stsucc\Bin(d-1,1-e^{-\lambda/d})$,
  confirming \eqref{eq:case2}.
\end{proof}

\begin{lemma}\thlabel{lem:log}
  Define
  $$\fC = h_{C,c}(\lambda ,d) := \log \Bigl[\bigl(e^{-\frac{\lambda}{c}+\frac{\lambda}{d}}+1-e^{-\frac{\lambda}{c}}\bigr)^{d-\kd}
     + p \bigl( 2-e^{-\frac{\lambda}{d}} \bigr)^{d-1}\Bigr],$$
  where $p$ is the value defined in \eqref{eq:HD}, which depends on $C$ and $c$.
  We have
  \begin{align*}
    \Bb\Poi(\lambda)\stsucc\Poi\biggl( \lambda +\frac{\mu}{d+1} - \fC \biggr).
  \end{align*}
\end{lemma}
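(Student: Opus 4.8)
The plan is to combine \thref{lem:Usd}, the dominance criterion \thref{thm:dominance.criterion}, and the explicit lower bound $U''$ from \thref{lem:U''}, thereby reducing the claim to an elementary inequality between two generating functions. By \thref{lem:Usd}, the measure $\Bb\Poi(\lambda)$ is the Poisson mixture $\Poi\bigl(\tfrac{\mu}{d+1}+U\tfrac\lambda d\bigr)$, where $U\sim\Uu\Poi(\lambda)$. To show that it dominates $\Poi\bigl(\lambda+\tfrac{\mu}{d+1}-\fC\bigr)$, I would apply \thref{thm:dominance.criterion}, according to which it suffices to verify
\[
  \lambda+\frac{\mu}{d+1}-\fC \;\le\; -\log\E\exp\Bigl(-\tfrac{\mu}{d+1}-U\tfrac\lambda d\Bigr) = \frac{\mu}{d+1}-\log\E e^{-U\lambda/d}.
\]
Cancelling $\mu/(d+1)$ from both sides, this is precisely the inequality $\fC\ge\lambda+\log\E e^{-U\lambda/d}$.

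Next I would eliminate the unknown law of $U$ in favor of the explicit lower bound. Since $U''\stprec U$ by \thref{lem:U''} and $x\mapsto e^{-\lambda x/d}$ is nonincreasing, stochastic dominance yields $\E e^{-U\lambda/d}\le\E e^{-U''\lambda/d}$. It therefore suffices to establish the stronger statement $e^{\fC}\ge e^{\lambda}\,\E e^{-U''\lambda/d}$.

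The heart of the argument is to evaluate the right-hand side from the two-case description of $U''$ in \eqref{eq:U''}, using the binomial generating function $\E s^{\Bin(n,\theta)}=(1-\theta+\theta s)^n$ with $s=e^{-\lambda/d}$. Writing $t=\lambda/d$ and distributing the prefactor $e^{\lambda}=e^{t\kd}e^{t(d-\kd)}$ across the two cases, the exponential factors telescope into exactly the bracketed quantities defining $\fC$, giving
\[
  e^{\lambda}\,\E e^{-U''\lambda/d} = (1-q)\bigl(e^{-\lambda/c+\lambda/d}+1-e^{-\lambda/c}\bigr)^{d-\kd} + q\,\bigl(2-e^{-\lambda/d}\bigr)^{d-1},
\]
where $q=\P[U_1'<\kd]$.

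Comparing with $e^{\fC}=\bigl(e^{-\lambda/c+\lambda/d}+1-e^{-\lambda/c}\bigr)^{d-\kd}+p\,\bigl(2-e^{-\lambda/d}\bigr)^{d-1}$, the target inequality rearranges to $(q-p)\bigl(2-e^{-\lambda/d}\bigr)^{d-1}\le q\bigl(e^{-\lambda/c+\lambda/d}+1-e^{-\lambda/c}\bigr)^{d-\kd}$. Its right-hand side is nonnegative, while its left-hand side is nonpositive because $q\le p=e^{-bd}$ by \thref{lem:case2.small}, so the inequality holds. I expect the main obstacle to be this central computation: one must check that the $e^\lambda$ prefactor is absorbed exactly into the two brackets of $\fC$, and that replacing the true Case~2 weight $q$ by the larger bound $p$ preserves the inequality. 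This last point --- discarding a nonnegative contribution while enlarging the coefficient of a nonnegative power --- is the only place the direction $q\le p$ is used, and getting that direction right is the crux of the proof.
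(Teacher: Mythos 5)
Your proposal is correct and follows essentially the same route as the paper: both arguments rest on \thref{lem:Usd}, the Poisson-mixture dominance criterion of \thref{thm:dominance.criterion}, the replacement of $\Uu\Poi(\lambda)$ by the explicit lower bound $U''$, the same binomial generating-function computation with the $e^{\lambda}$ factor absorbed into the two brackets, and the bounds $q\leq p$, $1-q\leq 1$ from \thref{lem:case2.small}. The only (immaterial) difference is ordering: the paper first passes to the $U''$-mixture via \cite[Theorem~1.A.3(d)]{SS} and then applies the criterion, whereas you apply the criterion to the $U$-mixture first and then use monotonicity of $x\mapsto e^{-\lambda x/d}$ under stochastic dominance to insert $U''$.
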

\begin{proof}
  Combining \eqref{eq:Usd} and $U''\stprec \Uu\Poi(\lambda)$,
  it follows from \cite[Theorem~1.A.3, (d)]{SS} that
  \begin{align}\label{eq:interst}
    \Bb\Poi(\lambda)\stsucc \Poi\biggl (\f \mu {d+1} +  U''\f {\lambda }{d} \biggr).
  \end{align}
  In light of \thref{thm:dominance.criterion}, we would like to compute $- \log \E e^{- \f \lambda d U''}$.
  Recalling the definition of $U''$ in \eqref{eq:U''}, we use
  the fact that $\E x^{\Bin(n,p)}= ( 1- p + px)^n$ to compute
  \begin{align*}
    \E e^{ -\f \lambda d  U ''}&=(1-q)e^{-\frac{\lambda}{d}\kd}\Bigl(e^{-\frac{\lambda}{c}}+\bigl(1-e^{-\frac{\lambda}{c}}\bigr)e^{-\frac{\lambda}{d}}\Bigr)^{d-\kd}\\
    &\qquad\qquad + q e^{-\frac{\lambda}{d}}\Bigl( e^{-\frac{\lambda}{d}} +
                                             \bigl(1-e^{-\frac{\lambda}{d}}\bigr)e^{-\frac{\lambda}{d}} \Bigr)^{d-1}.
  \end{align*}
  Using the bound $q\leq p$ from \thref{lem:case2.small} and the trivial bound $1-q\leq 1$ in the first
  step, and factoring out $e^{-\lambda}$ in the second step,
  \begin{align*}
    \E e^{ -\f \lambda d  U ''}
     &\leq e^{-\frac{\lambda}{d}\kd}\Bigl(e^{-\frac{\lambda}{c}}+\bigl(1-e^{-\frac{\lambda}{c}}\bigr)e^{-\frac{\lambda}{d}}\Bigr)^{d-\kd}\\
    &\qquad\qquad + p e^{-\frac{\lambda}{d}}\Bigl( e^{-\frac{\lambda}{d}} +
                                             \bigl(1-e^{-\frac{\lambda}{d}}\bigr)e^{-\frac{\lambda}{d}} \Bigr)^{d-1}\\
    &= e^{-\lambda}\Bigl[\bigl(e^{-\frac{\lambda}{c}+\frac{\lambda}{d}}+1-e^{-\frac{\lambda}{c}}\bigr)^{d-\kd}
     + p \bigl( 2-e^{-\frac{\lambda}{d}} \bigr)^{d-1}\Bigr].
  \end{align*}
Thus,
$$- \log \E e^{ -\f \lambda d  U'' } = \lambda - h_{C,c}.
$$
Using the above calculation and \thref{thm:dominance.criterion}, we deduce that
$$\Poi\left(\f \mu {d+1} + U'' \f \lambda d\right)  \succeq\Poi\biggl( \lambda + \frac{\mu}{d+1} - \fC \biggr).$$
Together with \eqref{eq:interst}, this completes the proof.
\end{proof}

\begin{proof}[Proof of \thref{thm:critical}]
  As we noted before, the lower bound is a consequence of \cite[Proposition 15]{HJJ2}, and
  we just need to establish the upper bound by showing that the frog model on $\TT_d$ is almost surely
  recurrent with i.i.d.-$\Poi(2.28d)$ frogs per vertex for sufficiently large $d$.
  To apply our bootstrapping argument, we seek to show that for some $\delta>0$, it
  holds for all $\lambda\geq 0$ that $\Bb\Poi(\lambda)\stsucc\Poi(\lambda+\delta)$.
  Considering the result of \thref{lem:log},
  we need to choose $C$ and $c$ such that
  $\mu/(d+1)-\fC(\lambda,d)>\delta$ for all $\lambda\geq 0$ and sufficiently large~$d$. Recalling that
  $\mu=C(d+1)$, rearranging terms, and exponentiating both sides of the inequality,
  this is equivalent to showing that for some $C$, $c$, $\delta$, and $d_0$ it holds that
  \begin{align}
    \exp(\fC(\lambda,d)) &< e^{C-\delta},\label{eq:to.prove}
  \end{align}
  on the set $\{(\lambda, d)\colon \lambda\geq 0, d\geq d_0\}$.

  Towards proving this, we start with the inequality
  \begin{align}
    \exp(\fC(\lambda,d)) &= \bigl(e^{-\frac{\lambda}{c}+\frac{\lambda}{d}}+1-e^{-\frac{\lambda}{c}}\bigr)^{d-\kd}
     + p \bigl( 2-e^{-\frac{\lambda}{d}} \bigr)^{d-1}\nonumber\\
     &\leq \bigl(1 + e^{-\frac{\lambda}{c}}\bigl(e^{\frac{\lambda}{d}}-1\bigr) \bigr)^{d(1-\frac{1}{c})} + e^{-bd} 2^{ d-1}
     \label{eq:tobound}
  \end{align}
  obtained by applying the bounds $2-e^{-\lambda/d}\leq 2$ and $d- \kd  \leq d(1-1/c)$
  and substituting the value of $p$ from \eqref{eq:HD}.
  Note that $b$ depends on $C$ and $c$.
  Now we bound each of the two terms on the right hand side of \eqref{eq:tobound}
  for the right choice of $C$, $c$, and $d_0$.

  Some calculus shows that for any $d$ and $c$ satisfying $d>c$, the first term is maximized in
  $\lambda$ when $e^{\lambda/d}=d/(d-c)$. This demonstrates that if $d>c$, then
  \begin{align*}
    \bigl(1 + e^{-\frac{\lambda}{c}}\bigl(e^{\frac{\lambda}{d}}-1\bigr) \bigr)^{d(1-\frac{1}{c})}
      &\leq \Biggl( 1 + \biggl(\frac{d-c}{d}\biggr)^{d/c}\biggl(\frac{d}{d-c}-1\biggr)
        \Biggr)^{d(1-\frac{1}{c})}\\
      &= \Biggl( 1 + \biggl(\frac{d-c}{d}\biggr)^{d/c}\frac{c}{d-c} \Biggr)^{d(1-\frac{1}{c})}\\
    &\leq \biggl( 1 + \frac{c}{d-c} \biggr)^{d(1-\frac{1}{c})}\\
    &\leq \exp\biggl(\frac{d(c-1)}{d-c} \biggr).
  \end{align*}
  Thus, for any $\epsilon>0$, we can choose $d_0$ sufficiently large that for all $d\geq d_0$ and $\lambda>0$,
  \begin{align}\label{eq:term1}
    \bigl(1 + e^{-\frac{\lambda}{c}}\bigl(e^{\frac{\lambda}{d}}-1\bigr) \bigr)^{d(1-\frac{1}{c})}
      &\leq e^{c-1+\epsilon}.
  \end{align}
  The second term to be bounded, $e^{-bd}2^{d-1}$, vanishes as $d\to\infty$ when $b > \log 2$.
  Referring back to \eqref{eq:HD} and doing some algebra, we see that $b>\log 2$ when
  \begin{align}\label{eq:Cc}
    C > -\log\biggl(1 - \frac{1}{c}-\sqrt{\frac{\log 2}{2}}\biggr).
  \end{align}
  If this inequality holds, then
  for any $\epsilon>0$, we can choose $d_0$ large enough that for all $d\geq d_0$,
  \begin{align}\label{eq:term2}
    e^{-bd}2^{d-1}&\leq\epsilon.
  \end{align}

  Finally, we set $c=3.26$ and $C=2.27$, which satisfies \eqref{eq:Cc}.
  Applying \eqref{eq:term1} and \eqref{eq:term2}, for any $\epsilon>0$, there exists $d_0$
  such that for all $d\geq d_0$ and $\lambda\geq 0$,
  \begin{align*}
    \exp(\fC(\lambda,d)) &\leq e^{2.26+\epsilon} + \epsilon.
  \end{align*}
  Choosing $\epsilon,\delta>0$ sufficiently small, we can bound this by $e^{2.27-\delta}$,
  confirming \eqref{eq:to.prove}.

  In all, we have shown that for $d\geq d_0$, if $\mu\geq 2.27(d+1)$
  then for all $\lambda\geq 0$,
  \begin{align}\label{eq:bootstrap}
    \Bb\Poi(\lambda)\stsucc\Poi(\lambda+\delta).
  \end{align}
  Increasing $d_0$ as necessary, we can revise our assumption to $\mu\geq 2.28d$ for $d\geq d_0$.
  The rest of the proof is to use this to bootstrap our way to the conclusion that the number of visits
  to the root in the frog model is almost surely infinite given these assumptions, which proceeds
  identically as the last paragraph of \thref{prop:binary.recurrence}.
\end{proof}


\bibliographystyle{amsalpha}

\providecommand{\bysame}{\leavevmode\hbox to3em{\hrulefill}\thinspace}
\providecommand{\MR}{\relax\ifhmode\unskip\space\fi MR }
\providecommand{\MRhref}[2]{%
  \href{http://www.ams.org/mathscinet-getitem?mr=#1}{#2}
}
\providecommand{\href}[2]{#2}






\end{document}